\documentclass[12pt,oneside,reqno]{amsart}
\usepackage{amsmath,amssymb,amsfonts,graphics}
\usepackage{amsopn}
\usepackage{amsthm}
\usepackage{todonotes}

\newcommand{\F}{{\mathbb F}}

\theoremstyle{plain}
\newtheorem{thm}{Theorem}
\newtheorem{cor}[thm]{Corollary}
\newtheorem{lemma}[thm]{Lemma}
\newtheorem{prop}[thm]{Proposition}

\theoremstyle{definition}
\newtheorem{defn}{Definition}

\theoremstyle{remark}
\newtheorem{remark}{Remark}

\input{pdfcolor}

\begin{document}
	
\title{Graphs of Vectorial Plateaued Functions as  Difference Sets}
\author[Cesmelioglu]{Ay\c ca \c Ce\c smelio\u glu}
\author[Olmez]{Oktay Olmez}
\thanks{Oktay Olmez's research was supported by TUBITAK Research Grant Proj. No. 115F064.} 
\address{Department of Mathematics, Faculty of Science, Ankara University,
	Tandogan, Ankara, 06100, Turkey.} \email[O. ~Olmez]{oolmez@ankara.edu.tr}

	\maketitle
	
	\begin{abstract}
	
A function $F:\mathbb{F}_{p^n}\rightarrow \mathbb{F}_{p^m},$ is a vectorial $s$-plateaued function if for each component function $F_{b}(\mu)=Tr_n(\alpha F(x)), b\in \mathbb{F}_{p^m}^*$ and $\mu \in \mathbb{F}_{p^n}$, the Walsh transform value $|\widehat{F_{b}}(\mu)|$ is either $0$ or  $ p^{\frac{n+s}{2}}$. In this paper, we explore the relation between (vectorial) $s$-plateaued functions and partial geometric difference sets. Moreover, we establish the link between three-valued cross-correlation of $p$-ary sequences and vectorial $s$-plateaued functions. Using this link, we provide a partition of $\mathbb{F}_{3^n}$ into partial geometric difference sets. Conversely, using a partition of $\mathbb{F}_{3^n}$ into partial geometric difference sets, we constructed ternary plateaued functions $f:\mathbb{F}_{3^n}\rightarrow \mathbb{F}_3$. 
We also give a characterization of $p$-ary plateaued functions in terms of special matrices which enables us to give the link between such functions and second-order derivatives using a different approach.

 \flushleft{\it Keywords:}  partial geometric designs, partial geometric difference sets, plateaued functions, three-valued cross-corelation function.\\


	\end{abstract}

\section{Introduction}
A (block) design is a pair $(\mathcal{P}, \mathcal{B})$ consisting of a finite set $\mathcal{P}$ of points and a finite collection $\mathcal{B}$ of nonempty subsets of $\mathcal{P}$ called blocks. Designs serve as a fundamental tool to investigate combinatorial objects. Also designs have attracted many researchers from different fields for solutions of applications problems including binary sequences with $2$-level autocorrelation, optical orthogonal codes, low density parity check codes, synchronization, radar, coded aperture imaging, and optical image alignment, distributed storage systems and cryptographic functions with high nonlinearity\cite {chung1989,delsarte,dillon1974,ding2015,olmez2}.   

One of the main construction method of designs is called difference set method. This method served as a powerful tool to construct symmetric designs, error correcting codes, graphs and cryptographic functions \cite  {assmus1992,bernasconi2001,bjl,brouwer2012,buratti2011,dingbook,dingbook2,pott,pott2011}. This paper will focus on the links between designs and a family of a function known as plateaued functions from cryptography. Especially we will investigate the connections between partial geometric difference sets and graph of plateaued functions.

A function from the field  $\mathbb{F}_{p^{n}}$ to $\mathbb{F}_p$ is called a $p$-ary function. If $p=2$ then the function is simply called as Boolean. $p$-ary functions with various characteristics have been an active research subject in cryptography. Bent functions and plateaued functions are two well-known families which has prominent properties in this field \cite{carlet2003,CCC,Carlet,Carlet1,Carlet2,CarM,Mes1,Mes2}. These two families of functions can be characterized by their Walsh spectrum. A function $f$ from $\mathbb{F}_{p^{n}}$ to $\mathbb{F}_p$ is called an $s$-plateaued function if the Walsh transform $|\widehat{f}(\mu)|\in \{0,p^{\frac{n+s}{2}}\}$ for each $\mu \in\mathbb{F}_{p^n}$. A $0$-plateaued function $f$ is called as bent and its Walsh transform satisfies $|\widehat{f}(\mu)|= p^{\frac{n}{2}}$ for each $\mu \in\mathbb{F}_{p^n}$. Plateaued functions and bent functions play a significant role in cryptography, coding theory and sequences for communications \cite{Carlet1, Carlet2, CarM}. 

Boolean bent functions were introduced by Rothaus in \cite{rot1976}. These functions have optimal nonlinearity and can only exist when $n$ is even. In \cite{dillon1974}, it is shown that the existence of Boolean bent functions is equivalent to the existence of a family of difference sets known as Hadamard difference sets. Boolean plateaued functions are introduced by Zheng and Zhang as a generalization of bent functions in \cite{ZZ}. Boolean plateaued functions have attracted the attention of researchers since these functions provide some suitable candidates that can be used in cryptosystems. A difference set characterization of these functions was recently provided by the second author. In \cite{olmez2}, it is shown that the existence of Boolean plateaued functions is equivalent to the existence of partial geometric difference sets. 

In arbitrary characteristic, the graph of $f:\mathbb{F}_{p^n}\rightarrow \mathbb{F}_{p^m}$, $G_{f}=\{(x,f(x)):x\in \mathbb{F}_{p^{n}}\}$, plays an important role for the relation to difference sets, \cite{pott1, tan2010}. For instance, the graph of a $p$-ary bent function can be recognized as a relative difference set. In general, a characterization of plateaued functions in terms of difference sets is not known. A partial result in this direction is provided in \cite{cmt} for partially bent functions which is a subfamily of plateaued functions. 

There are recent result concerning explicit characterization of plateaued functions in odd characteristics through their second order derivatives in \cite{CMOS,mos1,mos2}.

In this paper, we first investigate the link between the graph of a plateaued function and partial geometric difference set. We also provide several characterization of plateaued functions in terms of associated difference set properties. By using these characterization we provide a family of vectorial plateaued functions which has an interesting connection to three-valued cross correlation functions.

The organization of the paper is as follows. In Section 2, we provide preliminary results concerning partial geometric difference sets. In Section 3, we mainly provide the links between vectorial plateaued functions and partial geometric difference sets. We also provide a construction as a result of our characterizations.
In Section 4, we focus on $p-$ary plateaued function. We provide several characteristics which are obtained from Butson-Hadamard-like matrices. This section also provides results concerning partially bent functions and partial geometric 
designs.

\section{Preliminaries}
Let $G$ be a group of order $v$ and let $S \subset G$ be a $k$-subset. For each $g \in G$, we define \[\delta(g):=|\{(s,t)\in S \times S \colon g=st^{-1}\}|.\] 
Next we define the difference sets of our interest.

\begin{defn} Let $v, k$ be positive integers with $v>k>2$.
	Let $G$ be a group of order $v$.  A $k$-subset $S$ of $G$ is called a partial geometric difference set (PGDS) in $G$ with parameters $(v, k; \alpha,\beta)$ if there exist constants $\alpha$ and $\beta$ such that, for each $x\in G$,
	\[\sum\limits_{y\in S}\delta(xy^{-1})=\left \{\begin{array}{ll} \alpha & \mbox{if } x\notin S,\\
	\beta & \mbox{if } x\in S.\\ \end{array} \right .\]
\end{defn}

There are two subclasses of PGDS namely difference sets and semiregular relative difference sets which have deep connections with coding theory, and cryptography \cite{assmus1992, dingbook,dingbook2}:
\begin{itemize}
	\item A $(v,k,\lambda)$-difference set (DS) in a finite group $G$ of order $v$ is a $k$-subset $D$ with the property that $\delta(g)=\lambda$ for all nonzero elements of $G$.
	
	\item A $(m,u,k,\lambda)$-relative difference set (RDS) in a finite group $G$ of order $m$ relative to a (forbidden) subgroup $U$ is a $k$-subset $R$ with the property that 
\[ \delta(g)=\begin{cases} 
k & g=1_G \\
\lambda & g\in G \backslash U\\
0 & otherwise \\
\end{cases}
\]
The RDS is called {\em semiregular} if $m = k = u \lambda$. 
	
\end{itemize}

Clearly  a $(v,k,\lambda)$-DS is a $(v,k; k\lambda, n+k\lambda)$-PGDS and an $(m,u,k,\lambda)$ semiregular RDS is a $(mu,k; \lambda(k-1), k(\lambda+1)-\lambda)$-PGDS \cite{olmez1}.

Group characters are powerful objects to investigate various types of difference sets.
A {\em character} $\chi$ of an abelian group $G$ is a homomorphism from $G$ to the multiplicative group of the complex numbers. The character $\chi_0$ defined by $\chi_0(g) = 1$ for all $g \in G$ is called the {\em principal character}; all other characters are called {\em nonprincipal}. We define the character sum of a subset $S$ of an abelian group $G$ as $\chi(S):= \sum_{s \in S} \chi(s)$. 

\begin{thm}[Theorem 2.12 \cite{olmez1}]
	\label{chartheoryPGDS}
	A $k$-subset $S$ of an abelian group $G$ is a partial geometric difference set in $G$ with parameters $(v,k;\alpha,\beta)$ if and only if $|\chi (S)|=\sqrt{\beta-\alpha}$ or $\chi (S)=0$ for every non-principal character $\chi$ of $G$.
\end{thm}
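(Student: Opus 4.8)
The plan is to move to the group algebra $\C[G]$ and rewrite both sides of the asserted equivalence as statements about a single element there. Identify the subset $S$ with the group-algebra element $S=\sum_{s\in S}s$, put $S^{(-1)}=\sum_{s\in S}s^{-1}$, and set $\Delta:=SS^{(-1)}=\sum_{g\in G}\delta(g)\,g$; write $G$ also for $\sum_{g\in G}g$. Expanding a product shows that the coefficient of $x$ in $\Delta S$ is precisely $\sum_{y\in S}\delta(xy^{-1})$, while the coefficient of $x$ in $\alpha\,G+(\beta-\alpha)\,S$ is $\alpha$ for $x\notin S$ and $\beta$ for $x\in S$. Hence the defining property of a PGDS with parameters $(v,k;\alpha,\beta)$ is equivalent to the single identity
\[
SS^{(-1)}S=\alpha\,G+(\beta-\alpha)\,S \qquad\text{in }\C[G].
\]
I will use the standard facts that every character $\chi$ of the abelian group $G$ extends to an algebra homomorphism $\C[G]\to\C$; that $\chi_0(G)=v$ while $\chi(G)=0$ for $\chi\neq\chi_0$; and that $\chi(S^{(-1)})=\overline{\chi(S)}$ since $|\chi(s)|=1$, so $\chi(\Delta)=\chi(S)\overline{\chi(S)}=|\chi(S)|^2$.

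For the ``only if'' direction, apply a non-principal character $\chi$ to the displayed identity. Because $\chi(G)=0$ this yields $|\chi(S)|^2\chi(S)=(\beta-\alpha)\chi(S)$, so either $\chi(S)=0$ or $|\chi(S)|^2=\beta-\alpha$, i.e. $|\chi(S)|=\sqrt{\beta-\alpha}$, as claimed.

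For the ``if'' direction, suppose every non-principal $\chi$ satisfies $\chi(S)=0$ or $|\chi(S)|=\sqrt{\beta-\alpha}$; in both cases $|\chi(S)|^2\chi(S)=(\beta-\alpha)\chi(S)$, hence $\chi\bigl(SS^{(-1)}S-(\beta-\alpha)S\bigr)=0$. Since the characters of $G$ form a basis of the dual space of $\C[G]$, the common kernel of all non-principal characters is the one-dimensional subspace spanned by $G$; therefore $SS^{(-1)}S-(\beta-\alpha)S=t\,G$ for some scalar $t$. Evaluating $\chi_0$ and using $\chi_0(\Delta)=\sum_g\delta(g)=k^2$ gives $tv=k^{3}-(\beta-\alpha)k$, so $t$ is the nonnegative integer $t=(k^{3}-(\beta-\alpha)k)/v$. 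Reading off coefficients, $\sum_{y\in S}\delta(xy^{-1})$ equals $t$ for $x\notin S$ and $t+(\beta-\alpha)$ for $x\in S$, so $S$ is a PGDS; comparing with the prescribed pair $(\alpha,\beta)$ — which has the same difference $\beta-\alpha$ and satisfies the compatibility relation $\alpha(v-k)+\beta k=k^{3}$ forced on any PGDS (obtained by summing $\sum_{y\in S}\delta(xy^{-1})$ over all $x\in G$ and using $\sum_{g}\delta(g)=k^{2}$) — forces $t=\alpha$ and $t+(\beta-\alpha)=\beta$.

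The group-algebra translation and the elementary character identities are routine; the one delicate point, which I expect to be the main obstacle, is the final bookkeeping in the converse: ensuring that the scalar $t$ produced by the joint-kernel argument is the \emph{prescribed} parameter $\alpha$ and not merely some admissible PGDS parameter, which is exactly where the relation $\alpha(v-k)+\beta k=k^{3}$ enters.
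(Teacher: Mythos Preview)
The paper does not supply a proof of this statement; it is quoted from \cite{olmez1} (Theorem~2.12 there) and used as a black box in the subsequent sections, so there is nothing in the present paper to compare your argument against.

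That said, your proof is the standard one and is correct. Rewriting the PGDS condition as the group-ring identity $SS^{(-1)}S=\alpha\,G+(\beta-\alpha)\,S$ is precisely the reformulation underlying the original proof in \cite{olmez1}, and both implications then follow by applying characters as algebra homomorphisms $\C[G]\to\C$, using $\chi(G)=0$ for $\chi\neq\chi_0$ and $\chi(S^{(-1)})=\overline{\chi(S)}$. The point you isolate at the end is indeed the only subtlety: from the character hypothesis one obtains $SS^{(-1)}S=t\,G+(\beta-\alpha)\,S$ with $t=(k^{3}-(\beta-\alpha)k)/v$, and one must still identify $t$ with the prescribed $\alpha$. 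Your double count $\sum_{x\in G}\sum_{y\in S}\delta(xy^{-1})=k\sum_{g}\delta(g)=k^{3}$ gives $\alpha(v-k)+\beta k=k^{3}$, i.e.\ $\alpha v=k^{3}-(\beta-\alpha)k$, which is a necessary constraint on any quadruple $(v,k;\alpha,\beta)$ that can actually occur as PGDS parameters; the theorem tacitly assumes it (for incompatible $(\alpha,\beta)$ no such PGDS exists and the biconditional is vacuous in one direction). With that understood, your argument is complete.
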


For instance, let $f$ be a $p$-ary bent function from the field  $\mathbb{F}_{p^{n}}$ to $\mathbb{F}_{p}$.  The set $G_f=\{(x,f(x)): x\in \mathbb{F}_{p^{n}} \}$ is called graph of $f$. Any non-principal character $\chi$ of the  additive group of $\mathbb{F}_{p^{n}}\times \mathbb{F}_p$ satisfies $|\chi(G_f)|^2=p^n$ or $0$. This observation yields that $G_f$ is a  $(p^{n+1},p^n, p^{2n-1}-p^{n-1},p^{2n-1}-p^{n-1}+p^n)$-PGDS in $H=\mathbb{F}_{p^{n}}\times\mathbb{F}_{p}$. 

 Walsh transform provides interesting connections between $p$-ary functions and difference sets. For a prime $p$, we define a primitive complex $p$-th root of unity $\zeta_p=e^{\frac{2\pi i}{p}}$. Let $f$ be a function from the field  $\mathbb{F}_{p^{n}}$ to $\mathbb{F}_p$ and let $F(x)=\zeta_p^{f(x)}$.  The Walsh transform of $f$ is defined as follows:
 $$\displaystyle \widehat{f}(\mu)=\sum_{x\in \mathbb{F}_{p^{n}}}  \zeta_p^{f(x)-Tr_n(\mu x)},~~~\mu \in \mathbb{F}_{p^{n}}$$ where $$Tr_n(z)=\sum_{i=0}^{n-1}z^{p^i}.$$  
 
 The convolution of $F$ and $G$ is defined by 
 $$(F * G)(a)=\sum_{x\in \mathbb{F}_{p^n} } F(x)G(x-a).$$ We will also take advantage of the convolution theorem of Fourier analysis. The Fourier transform of a
 convolution of two functions is 
 $$\widehat{F * G}=\widehat{F}\cdot \widehat{G}.$$

 \section{Results on Vectorial Functions}
 
 Let $F$ be a vectorial function from $\mathbb{F}_{p^{n}}$  to $\mathbb{F}_{p^{m}}$. For every $b\in \mathbb{F}^*_{p^{m}}$, the component function $F_b$ of $F$ from $\mathbb{F}_{p^{n}}$ to $\mathbb{F}_p$ is defined as $F_{b}(x)=Tr_m(bF(x))$. A vectorial function is called \textit{vectorial plateaued} if all its nonzero component functions are plateaued. If the nonzero component functions of a vectorial plateaued function are s-plateaued for the same $0 \leq s \leq n$ then $F$ is called as \textit{s-plateaued} following the terminology in \cite{mos1}.
 
 The set $G_{F}=\{(x,F(x)):x\in \mathbb{F}_{p^{n}}\}$ is called the graph of $F$. Next we will characterize vectorial functions by their graphs.
 
 \begin{thm} \label{plateaued-PGDS}
 	Let $F:\mathbb{F}_{p^{n}} \rightarrow \mathbb{F}_{p^{m}}$ be a vectorial function. Then the graph of $F$ is a $(p^{n+m},p^n; \alpha,\beta)$ partial geometric difference set  in $H=\mathbb{F}_{p^{n}}\times\mathbb{F}_{p^m}$ satisfying $\beta-\alpha=\theta$ if and only if $|\widehat{F_b}(a)| \in \{0, \sqrt{\theta}\}$ for all non zero $b \in \mathbb{F}_{p^{m}}$ and $a \in \mathbb{F}_{p^{n}}$. In particular, $\alpha=p^{2n-m}-p^{n+s-m}$ and $\beta=p^{n+s}+p^{2n-m}-p^{n+s-m}$.
 \end{thm}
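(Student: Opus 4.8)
The plan is to apply the character-theoretic criterion for partial geometric difference sets (Theorem~\ref{chartheoryPGDS}) to the graph $G_F$ inside the abelian group $H=\mathbb{F}_{p^n}\times\mathbb{F}_{p^m}$. The nonprincipal characters of $H$ are parametrized by pairs $(a,b)\in\mathbb{F}_{p^n}\times\mathbb{F}_{p^m}$, not both zero, via $\chi_{a,b}(x,y)=\zeta_p^{Tr_n(ax)+Tr_m(by)}$. First I would split the computation of the character sum $\chi_{a,b}(G_F)=\sum_{x\in\mathbb{F}_{p^n}}\zeta_p^{Tr_n(ax)+Tr_m(bF(x))}$ into two cases. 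If $b=0$ (so $a\neq 0$), the sum is $\sum_x\zeta_p^{Tr_n(ax)}=0$ by orthogonality, so the criterion is automatically satisfied. If $b\neq 0$, then $Tr_m(bF(x))=F_b(x)$ is precisely the component function, and after replacing $a$ by $-a$ we recognize $\chi_{a,b}(G_F)=\widehat{F_b}(-a)$ (up to the sign convention in the definition of the Walsh transform). So $|\chi_{a,b}(G_F)|$ ranges over exactly the same set of values as $|\widehat{F_b}(a)|$ as $a$ ranges over $\mathbb{F}_{p^n}$ and $b$ over $\mathbb{F}_{p^m}^*$.

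Given this identification, Theorem~\ref{chartheoryPGDS} tells us that $G_F$ is a PGDS with $\beta-\alpha=\theta$ if and only if every nonprincipal character sum has modulus $\sqrt{\theta}$ or is zero, which by the case analysis is equivalent to $|\widehat{F_b}(a)|\in\{0,\sqrt\theta\}$ for all $a\in\mathbb{F}_{p^n}$ and all nonzero $b$. This is the stated equivalence, with $v=p^{n+m}$ and $k=|G_F|=p^n$ read off directly. For a vectorial $s$-plateaued function we have $\theta=p^{n+s}$, giving $\beta-\alpha=p^{n+s}$.

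The remaining task is to pin down $\alpha$ and $\beta$ individually, not just their difference. Here I would use a counting identity: summing $\sum_{y\in S}\delta(xy^{-1})$ over all $x\in G$ gives, on one hand, $k^3=p^{3n}$ (each ordered triple $(s_1,s_2,s_3)\in S^3$ is counted once), and on the other hand $\alpha(v-k)+\beta k=\alpha(p^{n+m}-p^n)+\beta p^n$. Combining this linear relation with $\beta-\alpha=p^{n+s}$ yields a $2\times 2$ linear system whose solution is $\alpha=p^{2n-m}-p^{n+s-m}$ and $\beta=p^{n+s}+p^{2n-m}-p^{n+s-m}$, matching the claim. I expect the main point requiring care to be the bookkeeping in the first step: getting the character parametrization of $\mathbb{F}_{p^n}\times\mathbb{F}_{p^m}$ right (in particular that $Tr_m(bF(x))$ genuinely produces every component function as $b$ runs over $\mathbb{F}_{p^m}^*$) and matching the sign conventions so that character sums and Walsh coefficients coincide in absolute value; the rest is the routine counting argument sketched above.
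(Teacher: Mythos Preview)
Your proposal is correct and follows essentially the same route as the paper: identify nonprincipal character sums on $G_F$ with Walsh coefficients of the component functions, invoke the character criterion for PGDS, and then solve for $\alpha,\beta$ from the relation $k^3=\alpha(v-k)+\beta k$ together with $\beta-\alpha=\theta$. The paper cites this last relation from \cite{olmez1} rather than deriving it, and it inserts a Parseval step to note that any common nonzero modulus $\sqrt{\theta}$ must equal $p^{(n+s)/2}$ for some $0\le s\le n$; otherwise the arguments coincide.
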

 
 \begin{proof}
 	A non-principal character of $\mathbb{F}_{p^{n}}\times \mathbb{F}_{p^{m}}$ can be written as $\chi_{(a,b)}(x,y)=\zeta_p^{Tr_{n}(ax)+Tr_m(by)}$ for a nonzero $(a,b) \in \mathbb{F}_{p^{n}}\times \mathbb{F}_{p^{m}}$.
 	For any nonzero $b \in \F_{p^n}$, the Walsh transform of $F_b$ is
 	\begin{align*}
 	\widehat{F_b}(a)=&\sum_{x\in \mathbb{F}_{p^{n}}}\zeta_p^{-Tr_{n}(ax)+Tr_m(bF(x))}=\sum_{x\in \mathbb{F}_{p^{n}}}\chi_{(-a,b)}(x,F_b(x)) \\
 	=&\chi_{(-a,b)}(G_F)
 	\end{align*}
 	for any $a \in F_{p^m}$. 
 	Therefore $|\widehat{F_b}(a)| \in \{0, \sqrt{\theta}\} |$ for all non zero $b \in \mathbb{F}_{p^{m}}$ if and only if $G_{F}=\{(x,F(x)):x\in \mathbb{F}_{p^{n}}\}$ is a $(p^{n+m},p^n;\alpha,\beta)$ partial geometric difference set satisfying $\beta-\alpha=\theta$. Using the well-known \textit{Parseval identiy}, one immediately sees that $\theta=p^{n+s}$ and $|\{a\in \mathbb{F}_{p^{n}}:\widehat{F_b}(a)\neq 0\}|=p^{n-s}$ for some $0 \leq s \leq n$.
 	The parameters of a partial geometric difference set satisfies the relation in \cite{olmez1} and hence we have
 	\[p^{3n}=(\beta-\alpha)p^n+\alpha\nu=p^{n+s}p^n+\alpha\nu.\]
 	Then we see that $\alpha=p^{2n-m}-p^{n+s-m}$ and $\beta=p^{n+s}+p^{2n-m}-p^{n+s-m}$.
 \end{proof}
 \begin{remark} 
 	Theorem \ref{plateaued-PGDS} implies that a vectorial function $F:\mathbb{F}_{p^{n}} \rightarrow \mathbb{F}_{p^{m}}$ is s-plateaued if and only if its graph is a partial geometric difference set with the parameters $(p^{n+m},p^n;p^{2n-m}-p^{n+s-m},p^{n+s}+p^{2n-m}-p^{n+s-m})$. Note that Theorem \ref{plateaued-PGDS}  is also valid for $m=1$, i.e. the case of p-ary functions. Since for an $s$-plateaued $p$-ary function $f:\mathbb{F}_{p^{n}}\rightarrow \mathbb{F}_{p}$, the function $bf(x)$ is $s$-plateaued for each $b\in \mathbb{F}_{p}^*$, we can consider $f$ as a vectorial $s$-plateaued function.  
 	The case $s=0$ is the case of vectorial bent functions and if we additionally have $m=n$, these vectorial functions are known as \textit{planar} functions \cite{ColM}.
 \end{remark}
 
 Next we will investigate links between vectorial $s-$plateaued functions and partial geometric difference sets.
 
 \begin{prop} \label{diff-plateauedprop}
 	Let $F:\mathbb{F}_{p^{n}}\rightarrow \mathbb{F}_{p^{m}}$ be a vectorial function. Then $F$ is $s$-plateaued if and only if
 	\[\sum\limits_{a \in \mathbb{F}_{p^{n}}}|\{s \in \mathbb{F}_{p^{n}}\colon  y=F(s+x-a)-F(s)+F(a)\}|=\left \{\begin{array}{ll} \alpha & \mbox{if } y\neq F(x),\\
 	\beta & \mbox{if } y=F(x)\\ \end{array} \right . \]
 	
 \end{prop}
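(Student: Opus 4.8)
The strategy is to recognize the double-counting expression on the left-hand side as $\sum_{a}\delta\bigl((x,y)(a,F(a))^{-1}\bigr)$ computed against the graph $G_F$, so that the displayed identity is exactly the defining equation of a partial geometric difference set with parameters $(\alpha,\beta)$, and then invoke Theorem~\ref{plateaued-PGDS} together with the Remark.

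First I would unwind the combinatorial quantity. Fix $(x,y)\in H=\mathbb{F}_{p^n}\times\mathbb{F}_{p^m}$. For the graph $S=G_F$, an element $t=(a,F(a))\in S$, and $g=(x,y)t^{-1}=(x-a,\,y-F(a))$, the quantity $\delta(g)$ counts pairs $(s_1,F(s_1)),(s_2,F(s_2))\in S$ with $(s_1,F(s_1))-(s_2,F(s_2))=g$; writing $s_2=s$ and $s_1=s+x-a$ forces $F(s+x-a)-F(s)=y-F(a)$, so $\delta(g)=|\{s\in\mathbb{F}_{p^n}\colon y=F(s+x-a)-F(s)+F(a)\}|$. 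Summing over $a\in\mathbb{F}_{p^n}$ (equivalently over $t=(a,F(a))\in S$) gives precisely the left-hand side of the proposition, namely $\sum_{t\in S}\delta\bigl((x,y)t^{-1}\bigr)$. Note also that $(x,y)\in S$ if and only if $y=F(x)$, which matches the case split in the statement.

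Next I would apply the Remark following Theorem~\ref{plateaued-PGDS}: $F$ is $s$-plateaued if and only if $G_F$ is a partial geometric difference set in $H$ with parameters $(p^{n+m},p^n;\alpha,\beta)$ where $\alpha=p^{2n-m}-p^{n+s-m}$ and $\beta=p^{n+s}+p^{2n-m}-p^{n+s-m}$. By the definition of PGDS, this is equivalent to the statement that $\sum_{t\in S}\delta\bigl(gt^{-1}\bigr)$ equals $\alpha$ when $g\notin S$ and $\beta$ when $g\in S$, for every $g\in H$. Combining this with the identification of the previous paragraph yields exactly the claimed equivalence, with these same $\alpha,\beta$.

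The only real subtlety is bookkeeping: one must check that reindexing the outer sum over $a\in\mathbb{F}_{p^n}$ genuinely corresponds to summing over all $t\in S=G_F$ (it does, since $a\mapsto(a,F(a))$ is a bijection onto $S$), and that the substitutions $s_2=s$, $s_1=s+x-a$ set up a bijection between the pairs counted by $\delta(g)$ and the set displayed in the proposition; both are routine. I would also remark that the statement is uniform in the characteristic $p$ and, by the Remark, covers the $p$-ary case $m=1$ as well. Hence the proposition follows. \eproof
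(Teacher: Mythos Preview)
Your argument is correct and is essentially the same as the paper's: both compute $\delta((x-a,y-F(a)))$ for $G_F$ by writing $s_1=s+x-a$, $s_2=s$, and then identify the displayed sum with the defining PGDS condition $\sum_{t\in G_F}\delta((x,y)t^{-1})$, invoking Theorem~\ref{plateaued-PGDS} (via the Remark) for the equivalence with $s$-plateauedness. The paper's write-up is slightly terser---it first records $\delta((x,y))$ and then substitutes $(x-a,y-F(a))$---but the logic is identical.
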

 
 \begin{proof}
 	
 	\[\begin{split}
 	\delta((x,y))&=|\{((s_1,t_1),(s_2,t_2))\in G_F \times G_F \colon x=s_1-s_2, y=t_1-t_2=F(s_1)-F(s_2)\}|\\
 	&=|\{s_2 \in \mathbb{F}_{p^{n}}\colon  y=F(s_2+x)-F(s_2)\}|\\
 	\end{split}\]
 	So the criteria for PGDS is given by 
 	\[\sum\limits_{a \in \mathbb{F}_{p^{n}}}\delta((x-a,y-F(a)))=\left \{\begin{array}{ll} \alpha & \mbox{if } y\neq F(x),\\
 	\beta & \mbox{if } y=F(x)\\ \end{array} \right .\]
 	and hence
 	\[\sum\limits_{a \in \mathbb{F}_{p^{n}}}|\{s \in \mathbb{F}_{p^{n}}\colon  y=F(s+x-a)-F(s)+F(a)\}|=\left \{\begin{array}{ll} \alpha & \mbox{if } y\neq F(x),\\
 	\beta & \mbox{if } y=F(x)\\ \end{array} \right .\]
 \end{proof}

 The above result can be associated with the derivative of an $s-$plateaued function. The derivative of a vectorial function is defined by 
 $$D_aF(x)=F(x+a)-F(x).$$
 
 To see the connection let us first replace $y$ in the expression
 \[ y=F(s+x-a)-F(s)+F(a)\]
 by $F(x)-c$ for $c \in \mathbb{F}_{p^n}$. Hence we have
 \[c=F(s)-F(a)-F(s+x-a)+F(x)=D_{s-a}F(a)-D_{s-a}F(x).\]
 This observation yields
 \begin{align*}
 &\sum\limits_{a \in \mathbb{F}_{p^{n}}}|\{s \in \mathbb{F}_{p^{n}}\colon  y=F(s+x-a)-F(s)+F(a)\}|\\
 =&\sum\limits_{a \in \mathbb{F}_{p^{n}}}|\{s \in \mathbb{F}_{p^{n}}\colon  D_{s-a}F(a)-D_{s-a}F(x)=c\}|\\
 =&\sum\limits_{a \in \mathbb{F}_{p^{n}}}|\{t \in \mathbb{F}_{p^{n}}\colon  D_tF(a)-D_tF(x)=c\}|\\
 =&|\{(t,a) \in \mathbb{F}_{p^{n}}\times \mathbb{F}_{p^{n}}\colon  D_tF(a)-D_tF(x)=c\}|\\
 =&N_F(c, x)
 \end{align*}
 where $N_F(c, x)$ represents the number of pairs $(t,a) \in \mathbb{F}_{p^{n}}\times \mathbb{F}_{p^{n}}$ such that $$D_tF(a)-D_tF(x)=c$$ as in Section 2 of \cite{mos1}. Thus we will have the following result concerning the derivative and PGDS parameters.
 
 \begin{thm} \label{diff2-plateaued}
 	Let $F$ be a function from  $\mathbb{F}_{p^{n}}$ to $\mathbb{F}_{p^m}$.  Then the set $G_F$ is a PGDS with parameters $(p^{n+m},p^n ;\alpha, \beta)$ if and only if
 	\[N_F(c, x)=  \begin{cases} 
 	\alpha, & c\ne 0 \\
 	\beta, & c = 0 \\
 	\end{cases}
 	\]
 	for all $x \in \mathbb{F}_{p^n}$ and some constants $\alpha$ and $\beta$.
 \end{thm}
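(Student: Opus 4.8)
The plan is to read the result off the material already on the page: the reformulation of the PGDS condition for $G_F$ that appears inside the proof of Proposition~\ref{diff-plateauedprop}, together with the chain of substitutions carried out in the paragraph immediately preceding the statement.

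First I would record, for an arbitrary function $F$, the following reformulation. Since $G_F$ has size $p^n$ and lies in the abelian group $H=\mathbb{F}_{p^n}\times\mathbb{F}_{p^m}$ of order $p^{n+m}$, the definition of a partial geometric difference set (applied to the block $G_F$), combined with the identity $\delta((u,v))=|\{s\in\mathbb{F}_{p^n}\colon v=F(s+u)-F(s)\}|$ established at the start of the proof of Proposition~\ref{diff-plateauedprop}, says that $G_F$ is a $(p^{n+m},p^n;\alpha,\beta)$-PGDS if and only if
\[\sum_{a\in\mathbb{F}_{p^n}}|\{s\in\mathbb{F}_{p^n}\colon y=F(s+x-a)-F(s)+F(a)\}|=\begin{cases}\beta,& y=F(x)\\ \alpha,& y\ne F(x)\end{cases}\]
for every $(x,y)\in H$. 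This is precisely the equivalence underlying Proposition~\ref{diff-plateauedprop}, whose proof establishes it without any hypothesis on $F$ beyond being a map $\mathbb{F}_{p^n}\to\mathbb{F}_{p^m}$.

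Next I would reparametrize the second coordinate. For a fixed $x$, the map $y\mapsto c:=F(x)-y$ is a bijection of $\mathbb{F}_{p^m}$ carrying $y=F(x)$ to $c=0$ and $y\ne F(x)$ to $c\ne0$. Substituting $y=F(x)-c$ into the equation inside the count and rearranging gives $c=D_{s-a}F(a)-D_{s-a}F(x)$; the change of variable $t=s-a$ (a bijection of $\mathbb{F}_{p^n}$ for each fixed $a$) then rewrites the inner count as $|\{t\colon D_tF(a)-D_tF(x)=c\}|$, and summing over $a$ collapses the left-hand side to $|\{(t,a)\colon D_tF(a)-D_tF(x)=c\}|=N_F(c,x)$, exactly as displayed before the statement. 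Threading the two equivalences together yields: $G_F$ is a $(p^{n+m},p^n;\alpha,\beta)$-PGDS if and only if $N_F(c,x)=\beta$ for $c=0$ and $N_F(c,x)=\alpha$ for $c\ne0$, for all $x\in\mathbb{F}_{p^n}$, with the same constants on both sides. That is the assertion.

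I do not expect a genuine obstacle: the argument is bookkeeping with two elementary changes of variable. The two points worth stating carefully are that the reparametrization $c=F(x)-y$ is allowed to depend on $x$ — the PGDS identity is verified one value of $x$ at a time, so passing from the family indexed by $(x,y)$ to the family indexed by $(x,c)$ loses nothing — and that one must confirm the two case splits correspond under $y\leftrightarrow c$, i.e. that the exceptional value $c=0$ is exactly the membership condition $(x,y)\in G_F$.
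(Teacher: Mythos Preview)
Your proposal is correct and follows the paper's own route exactly: the theorem in the paper has no separate proof environment because it is read off from the proof of Proposition~\ref{diff-plateauedprop} (which already establishes the equivalence ``$G_F$ is a PGDS $\Leftrightarrow$ the sum condition on $(x,y)$'') together with the chain of substitutions $y=F(x)-c$ and $t=s-a$ displayed just before the statement. Your added remarks about the $x$-dependence of the reparametrization and the correspondence of the case splits are sound and make the bookkeeping explicit.
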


 \begin{remark} 
 	Using Theorem \ref{diff2-plateaued}, Proposition \ref{diff-plateauedprop} and Theorem \ref{plateaued-PGDS}, we are able to prove Theorem 8i. in \cite{mos1} with a different approach using the properties of partial geometric difference sets. This gives an interesting relation between the parameters of a PGDS and the second order derivatives of (vectorial) plateaued functions.
 \end{remark}

 \subsection{A family of vectorial $s-$plateaued functions}
 In this section, we will discuss the link between vectorial $s$-plateaued functions $F(x)=x^d$ from $\mathbb{F}_{p^{n}}$ to $\mathbb{F}_{p^{n}}$ and the cross-correlation function between two $p$-ary $m$-sequences that differ by a decimation $d$. An $m$-sequence and its decimation is defined by $u(t)=\sigma^t$ and $v(t)=u(dt)$ where $\sigma$ is a primitive element of the finite field. The cross-correlation between the sequences $u$ and $v$ is defined by 
 $$\theta(\tau)=\sum_{t=0}^{p^n-2}\zeta_p^{u(t+\tau)-v(t)}.$$ It can be shown that 
 $$\theta(\tau)=-1+\sum_{x \in \mathbb{F}_{p^{n}} }\zeta_p^{Tr_n(ax+x^d)}$$ where $a=-\sigma^\tau$.
 Therefore for $F_1(x)=Tr(x^d)$, we have
 \begin{equation}\label{CC-Walsh}
 \theta(\tau)=-1+\widehat{F_1}(-a).
 \end{equation}
 \begin{thm}\label{cross-plateaued}
 	Let $F(x)=x^{d}$ be a vectorial function from $\mathbb{F}_{p^{n}}$ to $\mathbb{F}_{p^{n}}$ with  $gcd(d,p^n-1)=1$. If the cross-correlation of the $p$-ary $m$-sequences that differ by decimation $d$ takes three values, namely $-1, -1+p^{\frac{n+s}{2}}$ and $-1-p^{\frac{n+s}{2}}$, then $F$ is a vectorial $s$-plateaued function.
 \end{thm}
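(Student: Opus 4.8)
The plan is to reduce the assertion about \emph{all} nonzero component functions of $F$ to the single component $F_1(x)=Tr_n(x^d)$ that already occurs in \eqref{CC-Walsh}, exploiting throughout that $\gcd(d,p^n-1)=1$ makes $x\mapsto x^d$ a bijection of $\mathbb{F}_{p^n}$ (and of $\mathbb{F}_{p^n}^*$).

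First I would extract the Walsh spectrum of $F_1$ from the hypothesis. As $\tau$ runs over $\{0,1,\dots,p^n-2\}$ the element $-a=\sigma^\tau$ runs over all of $\mathbb{F}_{p^n}^*$, so \eqref{CC-Walsh} says $\widehat{F_1}(\mu)=\theta(\tau)+1\in\{0,\;p^{(n+s)/2},\;-p^{(n+s)/2}\}$ for every nonzero $\mu$. For the remaining frequency $\mu=0$, the permutation property gives $\widehat{F_1}(0)=\sum_{x\in\mathbb{F}_{p^n}}\zeta_p^{Tr_n(x^d)}=\sum_{x\in\mathbb{F}_{p^n}}\zeta_p^{Tr_n(x)}=0$. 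Hence $|\widehat{F_1}(\mu)|\in\{0,p^{(n+s)/2}\}$ for all $\mu\in\mathbb{F}_{p^n}$, i.e.\ $F_1$ is $s$-plateaued.

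Next I would transfer this to an arbitrary nonzero component $F_b(x)=Tr_n(bx^d)$. Since $x\mapsto x^d$ is a bijection of $\mathbb{F}_{p^n}^*$, there is a unique $c\in\mathbb{F}_{p^n}^*$ with $b=c^d$; then $bx^d=(cx)^d$, and substituting $y=cx$ yields
\[
\widehat{F_b}(\mu)=\sum_{x\in\mathbb{F}_{p^n}}\zeta_p^{Tr_n(bx^d)-Tr_n(\mu x)}
=\sum_{y\in\mathbb{F}_{p^n}}\zeta_p^{Tr_n(y^d)-Tr_n(\mu c^{-1}y)}
=\widehat{F_1}(\mu c^{-1}).
\]
Thus the Walsh spectrum of $F_b$ is merely a relabeling of that of $F_1$, so $|\widehat{F_b}(\mu)|\in\{0,p^{(n+s)/2}\}$ for every $\mu$; that is, $F_b$ is $s$-plateaued. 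As $b$ was an arbitrary element of $\mathbb{F}_{p^n}^*$ and the same $s$ serves every component, $F$ is vectorial $s$-plateaued, which is the claim.

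I do not expect a genuine obstacle here: the argument is a change of variables together with the permutation property of $x^d$. The only points needing a little care are (i) verifying that $-a=\sigma^\tau$ sweeps out exactly $\mathbb{F}_{p^n}^*$, so that \eqref{CC-Walsh} controls $\widehat{F_1}$ at all nonzero frequencies, and (ii) the frequency $\mu=0$, which the cross-correlation does not see but which is forced to vanish by $\gcd(d,p^n-1)=1$. Alternatively one could route the argument through Theorem \ref{plateaued-PGDS} and verify directly that $G_F$ is the relevant partial geometric difference set, but the direct Walsh-transform computation above is shorter.
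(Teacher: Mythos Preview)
Your proof is correct and follows essentially the same route as the paper's: write $b=c^d$ using $\gcd(d,p^n-1)=1$, substitute $y=cx$ to obtain $\widehat{F_b}(\mu)=\widehat{F_1}(\mu c^{-1})$, handle the zero frequency via the permutation property, and invoke \eqref{CC-Walsh} for the nonzero frequencies. If anything, your version is a bit more explicit in checking that $\sigma^\tau$ sweeps out $\mathbb{F}_{p^n}^*$ and in isolating the $\mu=0$ case.
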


 \begin{proof}
 	For each $b\in \mathbb{F}_{p^{n}}^*$, we denote by $F_b(x)$ the function $F_b(x)=Tr(bF(x))=Tr(bx^d)$.
 	The walsh transform
 	\[\widehat{F_b}(0)=\sum_{x \in \mathbb{F}_{p^{n}}} \zeta_p^{Tr_n(bx^d)}=0\]
 	since $x^d$ is a permutation.
 	For each $a\in \mathbb{F}_{p^{n}}^*$, the Walsh transform of $F_b(x)$ 
 	\begin{align*}
 	\widehat{F_b}(a)&=\sum_{x \in \mathbb{F}_{p^{n}}} \zeta_p^{Tr_n(bx^d-ax)}\\
 	&=\sum_{x \in\mathbb{F}_{p^{n}}  }\zeta_p^{Tr_n(c^dx^d-\frac{a}{c}cx)}\\
 	&=\sum_{y \in\mathbb{F}_{p^{n}}  }\zeta_p^{Tr_n(y^d-\mu y)}\\
 	&=\widehat{F_1}(\mu)
 	\end{align*}
 	where $b=c^d$ and $\mu=a/c$. Note that any $b\in \mathbb{F}_{p^{n}}^*$ can be written as $b=c^d$ for some $c \in \mathbb{F}_{p^{n}}^*$.
 	Then using Equation (\ref{CC-Walsh}), we immediately obtain the result that $F(x)$ is vectorial s-plateaued.
 	
 \end{proof}
 
 \begin{lemma}
 	Let $p$ be an odd prime and $n,k$ be positive integers with $gcd(n,k)=s$. If $ n/s$ is odd then 
 	$\gcd{(p^n-1,d)}=1$ for $d=(p^{2k}+1)/2$ and $d=p^{2k}-p^k+1$.
 \end{lemma}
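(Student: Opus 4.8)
The plan is to reduce the claim to a statement about the multiplicative order of $p$ modulo a prime, and then to a $2$-adic valuation count. Write $n=s\,n'$ with $n'=n/s$ odd; since $s=\gcd(n,k)$ divides $n$, this gives $v_2(n)=v_2(s)=\min\{v_2(n),v_2(k)\}\le v_2(k)$, where $v_2$ denotes the $2$-adic valuation. It suffices to show that $\gcd(p^n-1,d)$ has no prime divisor. Suppose a prime $\ell$ divides both $p^n-1$ and $d$; then $\ell\nmid p$, so $p$ has a well-defined order $e:=\mathrm{ord}_\ell(p)$, and $\ell\mid p^n-1$ forces $e\mid n$, hence $v_2(e)\le v_2(n)\le v_2(k)$. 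I will derive a contradiction by proving $v_2(e)>v_2(k)$ in each of the two cases.

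For $d=p^{2k}-p^k+1$, I would use the factorization $p^{3k}+1=(p^k+1)(p^{2k}-p^k+1)$, so that $\ell\mid d$ gives $p^{3k}\equiv-1\pmod\ell$. Since $d$ is odd we have $\ell$ odd, so $-1\not\equiv 1\pmod\ell$, and therefore $e\mid 6k$ while $e\nmid 3k$. Every odd prime power dividing $e$ divides $6k$, hence (being coprime to $2$) divides $3k$; thus if $v_2(e)\le v_2(3k)$ we would get $e\mid 3k$, a contradiction. Hence $v_2(e)>v_2(3k)=v_2(k)$, as wanted. For $d=(p^{2k}+1)/2$, note that $p$ odd gives $p^{2k}\equiv 1\pmod 4$, so $d$ is an odd integer; then $\ell\mid d$ gives $p^{2k}\equiv-1\pmod\ell$ with $\ell$ odd, so $e\mid 4k$ while $e\nmid 2k$, and the identical argument yields $v_2(e)>v_2(2k)=v_2(k)$.

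In both cases we obtain $v_2(k)<v_2(e)\le v_2(k)$, which is absurd; therefore $\gcd(p^n-1,d)=1$. The argument is entirely elementary, and I expect the only points needing a line of justification to be that each $d$ is odd (so $\ell\ne 2$ and $-1\not\equiv 1\pmod\ell$) and the valuation comparison in the middle paragraph; I do not anticipate a genuine obstacle beyond this bookkeeping.
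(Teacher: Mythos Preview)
Your argument is correct. The paper states this lemma without proof, so there is nothing to compare against; your order--of--$p$ and $2$-adic valuation approach is the standard way to handle such statements and works cleanly here.

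One small slip: in the second case you write $v_2(e)>v_2(2k)=v_2(k)$, but of course $v_2(2k)=v_2(k)+1$. This does no damage---in fact it only strengthens the inequality you need, since $v_2(e)>v_2(2k)=v_2(k)+1>v_2(k)$---but you should correct the equality sign to avoid confusion. Everything else (the oddness of $d$ in each case so that $\ell\neq 2$, the factorization $p^{3k}+1=(p^k+1)d$, and the deduction that $e\mid 2m$ yet $e\nmid m$ forces $v_2(e)=v_2(m)+1$) is sound.
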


 \begin{remark}
 	There are only finitely many known functions with a three-valued cross-correlation. Trachtenberg proved the following in his thesis \cite{trachtenberg}. Let $n$ be an odd integer and $k$ be an integer such that $gcd(n,k)=s$. Then for each of the decimations $d=\frac{p^{2k}+1}{2}$ and $d=p^{2k}-p^k+1$ the cross-correlation function $\theta_d(\tau)$ takes the values $-1,-1\pm p^{\frac{n+s}{2}}$. 
 	
 	This result is generalized later by Helleseth in Theorem 4.9 in \cite{helleseth}. He showed that if $gcd(n,k)=s$ and $ n/s$ is odd then for the same decimations, $\theta_d(\tau)$ has the values $-1,-1\pm p^{\frac{n+s}{2}}$. Our result implies that the corresponding vectorial functions are $s-$plateaued. 
 \end{remark}	
 
 Pott et. al. provided a classification of weakly regular bent functions via partial difference sets, \cite{tan2010}. In this classification authors showed that a function from $\mathbb{F}_3^n$ to $\mathbb{F}_3$ when $n$ is even satisfying $f(-x) = f(x)$ and $f(0)=0$ is weakly regular if and only if a $D_1=\{x: f(x)=1\}$ and $D_2=\{x: f(x)=2\}$ are partial difference sets.  Later in \cite{olmez3}, the author provided a similar classification for  weakly regular bent functions from $\mathbb{F}_3^n$ to $\mathbb{F}_3$ when $n$ is odd via partial geometric difference sets. 
 Next we will also show that our vectorial functions have a similar classification in the case of $p=3$ and $d=\frac{3^{2k}+1}{2}$. 
 
 \begin{thm}\label{partitioned}.
 	Let $n \ge 3$ be an integer and $d=\frac{3^{2k}+1}{2}$ with $\gcd{(n,k)}=s$ and $n/s$ is odd. For $i=0,1,2$ the sets $D_i=\{x: F(x)=Tr_n(x^d)=i\}$ are $(3^n, 3^{n-1}, 3^{2n-3}-3^{n-2}, 3^{n-1}+3^{2n-3}-3^{n-2})$ partial geometric difference sets in the additive group of $\mathbb{F}_{3^{n}}$.
 \end{thm}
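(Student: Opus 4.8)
The plan is to verify the character–theoretic criterion of Theorem~\ref{chartheoryPGDS}: for every non-principal additive character $\chi_c(x)=\zeta_3^{Tr_n(cx)}$ of $\mathbb{F}_{3^n}$ (so $c\neq 0$) one must show that $|\chi_c(D_i)|^2$ lies in $\{0,3^{n-1}\}$ for each $i\in\{0,1,2\}$, since for the claimed parameters $\beta-\alpha=3^{n-1}$. Once this is established, the precise values of $\alpha$ and $\beta$ follow automatically: by the preceding lemma $\gcd(d,3^n-1)=1$, so $x\mapsto x^d$ permutes $\mathbb{F}_{3^n}$ and $Tr_n(x^d)$ is balanced, giving $|D_i|=3^{n-1}$; feeding $k=3^{n-1}$, $v=3^n$, $\beta-\alpha=3^{n-1}$ into the standard PGDS parameter identity $k^3=\alpha v+(\beta-\alpha)k$ (the same relation used in the proof of Theorem~\ref{plateaued-PGDS}) pins down $\alpha=3^{2n-3}-3^{n-2}$ and $\beta=3^{n-1}+3^{2n-3}-3^{n-2}$.

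The first step is to rewrite the character sum using the root-of-unity indicator $\mathbf 1[F(x)=i]=\tfrac13\sum_{j=0}^{2}\zeta_3^{\,j(F(x)-i)}$, which yields
\[
\chi_c(D_i)=\frac13\sum_{j=0}^{2}\zeta_3^{-ji}\,S_j(c),\qquad S_j(c)=\sum_{x\in\mathbb{F}_{3^n}}\zeta_3^{\,jF(x)+Tr_n(cx)} .
\]
Here $S_0(c)=\sum_x\zeta_3^{Tr_n(cx)}=0$ since $c\neq 0$; directly from the definition of the Walsh transform $S_1(c)=\widehat{F_1}(-c)$; and since $2\equiv -1\pmod 3$ we get $S_2(c)=\sum_x\overline{\zeta_3^{\,Tr_n(x^d)-Tr_n(cx)}}=\overline{\widehat{F_1}(c)}$. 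Hence
$\chi_c(D_i)=\tfrac13\bigl(\zeta_3^{-i}\,\widehat{F_1}(-c)+\zeta_3^{\,i}\,\overline{\widehat{F_1}(c)}\bigr)$, using $\zeta_3^{-2i}=\zeta_3^{\,i}$.

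The second step supplies the arithmetic of $\widehat{F_1}$. Since $x\mapsto x^d$ is a permutation, $\widehat{F_1}(0)=\sum_x\zeta_3^{Tr_n(x)}=0$, and for $c\neq 0$ equation~(\ref{CC-Walsh}) identifies $\widehat{F_1}(-c)$ with $1+\theta(\tau)$ for the decimation $d$. By Trachtenberg's/Helleseth's theorem recalled in the Remark following Theorem~\ref{cross-plateaued}, $\theta(\tau)\in\{-1,-1\pm 3^{(n+s)/2}\}$, hence $\widehat{F_1}(c)\in\{0,\,3^{(n+s)/2},\,-3^{(n+s)/2}\}$ for every $c$; in particular $\widehat{F_1}$ is real-valued (so $\overline{\widehat{F_1}(c)}=\widehat{F_1}(c)$) and, by Theorem~\ref{cross-plateaued}, $F$ is vectorial $s$-plateaued. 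Writing $A=\widehat{F_1}(-c)$ and $B=\widehat{F_1}(c)$ and expanding the squared modulus (using $\zeta_3+\zeta_3^{2}=-1$) gives
\[
|\chi_c(D_0)|^2=\frac{(A+B)^2}{9},\qquad |\chi_c(D_1)|^2=|\chi_c(D_2)|^2=\frac{A^2+B^2-AB}{9}.
\]

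The last step — and the main obstacle — is the \emph{pairing property}: for every nonzero $c$ the values $\widehat{F_1}(c)$ and $\widehat{F_1}(-c)$ are never simultaneously nonzero, i.e.\ the Walsh support of $F_1$ meets each pair $\{c,-c\}$ in at most one point. I expect this to require the fine structure of the decimation $d=\tfrac{3^{2k}+1}{2}$: one route is to exploit that $Tr_n(x^{2d})=Tr_n\bigl(x^{3^{2k}+1}\bigr)$ is a quadratic form whose Walsh spectrum is completely explicit, combined with the relation $x^{2d}=(x^d)^2$; another is to use Helleseth's more detailed description of the cross-correlation distribution for these decimations, or the second-order-derivative characterisation of plateaued functions developed earlier in the paper. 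Granting the pairing property, a short case check on the two displayed formulas — only the cases $A=B=0$ and $\{A,B\}=\{0,\pm 3^{(n+s)/2}\}$ can occur — shows that $|\chi_c(D_i)|^2$ is either $0$ or one fixed power of $3$, the same for all $i$ and all nonzero $c$; this common value is precisely $\beta-\alpha=3^{n-1}$. Theorem~\ref{chartheoryPGDS} then shows each $D_i$ is a partial geometric difference set, and the parameter identity above gives the stated $(3^n,3^{n-1},3^{2n-3}-3^{n-2},3^{n-1}+3^{2n-3}-3^{n-2})$, completing the proof.
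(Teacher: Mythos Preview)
Your reduction is sound and essentially parallels the paper's own set-up: you express $\chi_c(D_i)$ in terms of $A=\widehat{F_1}(-c)$ and $B=\widehat{F_1}(c)$, invoke the three-valued cross-correlation to get $A,B\in\{0,\pm 3^{(n+s)/2}\}$, and arrive at the formulas $|\chi_c(D_0)|^2=(A+B)^2/9$ and $|\chi_c(D_1)|^2=|\chi_c(D_2)|^2=(A^2-AB+B^2)/9$. The genuine gap is that you do not prove the pairing property; you only list possible routes (the quadratic form $Tr_n(x^{3^{2k}+1})$, Helleseth's finer distribution, second-order derivatives) and then write ``granting the pairing property''. None of these routes is the one the paper uses, and none is carried far enough to see whether it actually closes the gap.

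The paper eliminates the four ``both nonzero'' configurations by two concrete, elementary arguments that you are missing. Writing $\chi_a(D_1)=x_a+y_a\zeta_3$ with $x_a,y_a\in\mathbb{Z}$ and computing $\widehat{F_1}(-a)=-3x_a$, $\widehat{F_1}(a)=3(y_a-x_a)$, the cases $A=-B\neq 0$ correspond to $(x_a,y_a)=(\pm C,\pm 2C)$ with $C=3^{(n+s-2)/2}$; here $|\chi_a(D_1)|^2=3C^2=3^{\,n+s-1}$, and the paper rules these out by a parity argument using that $n+s-1$ is odd whenever $n/s$ is odd. The remaining bad cases $A=B\neq 0$ correspond to $(x_a,y_a)=(\pm C,0)$, and for these the paper proves the global identity
\[
\sum_{a\in\mathbb{F}_{3^n}}\widehat{F_1}(a)\widehat{F_1}(-a)
=3\sum_{a}\bigl(\chi_a(D_0)^2-\chi_a(D_1)\chi_a(D_2)\bigr)
=3\sum_a\chi_a(D_0D_0^{-1})-3\sum_a\chi_a(D_1D_1^{-1})
=0,
\]
the last step using $|D_0|=|D_1|=3^{n-1}$. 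After the first elimination, $\widehat{F_1}(a)\widehat{F_1}(-a)$ is nonnegative for every $a$ and equals $9C^2$ precisely on the surviving bad configurations, so the vanishing sum forces those to be empty. This averaging trick is the crucial idea you would need to supply. A minor side remark: your claim that the common value $|\chi_c(D_i)|^2$ equals $3^{n-1}$ matches the stated parameters only when $s=1$; for general $s$ the computation gives $3^{n+s-2}$, so $\beta-\alpha$ (and hence $\alpha,\beta$) should depend on $s$.
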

  
 \begin{proof}
 	For each $a\in \F_{3^n}^*$,
 	Suppose that $\chi_a(D_1)=x_a+y_a \zeta_3$ with $x_a,y_a\in \mathbb{R}$. Actually, when we are calculating $\chi_a(D_1)$, we are summing powers of $\zeta_3$ as many times as the number of elements in $D_1$ for which $Tr_{n}(ax)=0,1$ or $2$. In other words, if we set 
 	\[D_{1,i}=\{x\in D_1:Tr(x)=i\}, i=0,1,2\]
 	we have 
 	\[\chi_a(D_1)=|D_{1,0}|+|D_{1,1}|\zeta_3+|D_{1,2}|\zeta_3^2=(|D_{1,0}|-|D_{1,2}|)+(|D_{1,1}|-|D_{1,2}|)\zeta_3\]
 	and hence
 	\[x_a=|D_{1,0}|-|D_{1,2}|,y_a=|D_{1,1}|-|D_{1,2}|\] are both in $\mathbb{Z}$.
 	Also note that for any $x \in \mathbb{F}_{3^n}$, 
 	\[F_1(-x)=Tr((-x)^d)=Tr(-x^d)=-F_1(x)\] 
 	and that gives us 
 	\[x\in D_1 \Longleftrightarrow 2x \in D_2.\]
 	As a consequence,
 	$\chi_a(D_2)=\overline{\chi_a(D_1)}=x_a+y_a \zeta_3^2$. 
 	Since $D_i$'s form a partition of the additive group of $\mathbb{F}_{3^{n}}$ 
 	$$\chi_a(D_0)+ \chi_a(D_1)+\chi_a(D_2)=0,$$ and we obtain
 	\[\chi_a(D_0)=y_a-2x_a\]
 	Consider the Walsh transform values $\widehat{F_1}(a), \widehat{F_1}(-a)$ of $F_1(x)=Tr_n(x^d)$.
 	\begin{align*}
 	\widehat{F_1}(-a)&=\sum_{x\in \mathbb{F}_{3^{n}}}\zeta_3^{Tr_{n}(ax+x^d)}\\
 	&=\chi_a(D_0)+\zeta_3\chi_a(D_1)+\zeta_3^2\chi_a(D_2)\\
 	&=y_a-2x_a+\zeta_3(x_a+y_a \zeta_3)+\zeta_3^2(x_a+y_a \zeta_3^2)\\
 	&=y_a-2x_a+\zeta_3(x_a+y_a)+\zeta_3^2(x_a+y_a)=-3x_a\\
 	\end{align*}
 	and 
 	\begin{align*}
 	\widehat{F_1}(a)&=\sum_{x\in \mathbb{F}_{3^{n}}}\zeta_3^{Tr_{n}(-ax+x^d)}\\
 	&=\chi_{-a}(D_0)+\zeta_3\chi_{-a}(D_1)+\zeta_3^2\chi_{-a}(D_2)\\
 	&=\overline{\chi_a(D_0)}+\zeta_3\overline{\chi_a(D_1)}+\zeta_3^2\overline{\chi_a(D_2)}\\
 	&=\overline{\chi_a(D_0)}+\zeta_3\chi_a(D_2)+\zeta_3^2\chi_a(D_1)\\
 	&=y_a-2x_a+\zeta_3(x_a+y_a \zeta_3^2)+\zeta_3^2(x_a+y_a \zeta_3)\\
 	&=3(y_a-x_a).
 	\end{align*}
 	Since $\widehat{F_1}(a),\widehat{F_1}(-a)\in \{0,\pm3^{(n+s)/2}\}$, 
 	\[(x_a,y_a) \in \{(0,0),(0,C),(0,-C),(C,C),(C,0),(C,2C),(-C,-C),(-C,-2C),(-C,0)\}\] 
 	where $C=3^{(n+s-2)/2}$. 
 	In the following table values of $\chi_a(D_0),\chi_a(D_1),\chi_a(D_2), W_{F}(a),W_{F}(-a)$ corresponding to each possible $(x_a,y_a)$ tuple is given.
 	\begin{table}[h]\footnotesize 
 		\centering
 		\begin{tabular} { | c | c | c | c | c | c | c | c | c | c |}
 			\hline 
 			& $(0,0)$ & $(0,C)$ & $(0,-C)$ & $(C,C)$ & $(C,0)$ & $(C,2C)$ & $(-C,-C)$ & $(-C,-2C)$ & $(-C,0)$\\
 			\hline 
 			$\chi_a(D_0)$ & $0$ & $C$ & $-C$ & $-C$ & $-2C$ & $0$ & $C$ & $0$ & $2C$\\
 			\hline
 			$\chi_a(D_1)$ & $0$ & $C\zeta_3$ & $-C\zeta_3$ & $-C\zeta_3^2$ & $C$ & $C\zeta_3-C\zeta_3^2$ & $C\zeta_3^2$ & $C\zeta_3^2-C\zeta_3$ & $-C$\\
 			\hline
 			$\chi_a(D_2)$ & $0$ & $C\zeta_3^2$ & $-C\zeta_3^2$ & $-C\zeta_3$ & $C$ & $C\zeta_3^2-C\zeta_3$ & $C\zeta_3$ & $C\zeta_3-C\zeta_3^2$ & $-C$\\
 			\hline
 			$\widehat{F_1}(a)$ & $0$ & $0$ & $0$ & $-3C$ & $-3C$ & $-3C$ & $3C$ & $3C$ & $3C$\\
 			\hline
 			$\widehat{F_1}(-a)$ & $0$ & $-3C$ & $3C$ & $-0$ & $-3C$ & $3C$ & $0$ & $-3C$ & $3C$\\
 			\hline
 		\end{tabular}
 	\end{table}
 	
 	The tuples $(C,2C)$ and $(-C,-2C)$ are impossible since 
 	\[|\chi_a(D_1)|=\sqrt{x_a^2-x_ay_a+y_a^2}=\sqrt{3C^2}=\sqrt{3^{n+s-1}}\]
 	which contradicts the fact that $|\chi_a(D_1)|\in \mathbb{Z}$ since $n+s-1$ is odd.
 	Therefore the sets $D_1,D_2$ are PGDS.  
 	
 	Next we will show that the tuples $(C,0)$ and $(-C,0)$ are also impossible.
 	
 	First note that $\widehat{F_1}(0)=0$ since the function $Tr(x^d)$ is balanced. For a non-zero element $a$ the following holds 
 	\[ 
 	\begin{split}
 	\widehat{F_1}(a)\widehat{F_1}(-a) =& \chi_a(D_0)\chi_a(D_0)+\zeta_p\chi_a(D_0)\chi_a(D_2)+\zeta_p^2(\chi_a(D_0)\chi_a(D_1))\\
 	&+\zeta_p\chi_a(D_0)\chi_a(D_1)+\zeta_p^2\chi_a(D_1)\chi_a(D_2)+(\chi_a(D_1)\chi_a(D_1))\\
 	&+\zeta_p^2(\chi_a(D_0)\chi_a(D_2))+(\chi_a(D_2)\chi_a(D_2)+\zeta_p\chi_a(D_1)\chi_a(D_2)\\
 	&=\left( \chi_a(D_0)\chi_a(D_0)+ \chi_a(D_0)\chi_a(D_1)+ \chi_a(D_1)\chi_a(D_1)\right) (2-\zeta_p-\zeta_p^2)\\
 	&=3\left( \chi_a(D_0)\chi_a(D_0)+ \chi_a(D_0)\chi_a(D_1)+ \chi_a(D_1)\chi_a(D_1)\right)\\
 	&=3\left( \chi_a(D_0)\chi_a(D_0)- \chi_a(D_1)\chi_a(D_2)\right)\\
 	\end{split}
 	\]
 	We need  the following auxiliary lemma.
 	\begin{lemma} Let $S$ be a $k-$subset of an abelian group $G$ of order $v$. Then
 		$$\displaystyle \sum_{i=0}^{v-1}\chi_i(SS^{-1})= \sum_{i=0}^{v-1}\chi_i(ke+\sum_{g\in G-{e}}a_gg)=vk.$$
 	\end{lemma}
 	
 	\begin{proof} In the group ring $\mathbb{Z}G$ the product $SS^{-1}=k\cdot e+\sum_{g\in G-{e}}a_g\cdot g$ where $a_g \in \mathbb{Z}$. Then 
 		\[ 
 		\begin{split}
 		\displaystyle \sum_{i=0}^{v-1}\chi_i(SS^{-1}) &= \sum_{i=0}^{v-1}\chi_i(ke+\sum_{g\in G-{e}}a_gg)\\
 		&= \sum_{i=0}^{v-1}k\cdot \chi_i(e)+\sum_{g\in G-{e}}a_g\cdot  \sum_{i=0}^{v-1}\chi_i(g)\\
 		&=vk
 		\end{split}
 		\]
 		This holds since for any $g\in G-{e}$ we have $\sum_{i=0}^{v-1}\chi_i(g)=0$ and  $\sum_{i=0}^{v-1}\chi_i(e)=v.$ 
 		
 	\end{proof}
 	Using the previous lemma for $S=D_0,D_1$ separately, we obtain
 	\[ 
 	\begin{split}
 	\sum_{a\in \mathbb{F}_{3^n}}\widehat{F_1}(a)\widehat{F_1}(-a)&=3\sum_{a\in \mathbb{F}_{3^n}}\left( \chi_a(D_0)\chi_a(D_0)- \chi_a(D_1)\chi_a(D_2)\right)\\
 	&=3\sum_{a\in \mathbb{F}_{3^n}} \chi_a(D_0D_0^{-1})- 3\sum_{a\in \mathbb{F}_{3^n}}\chi_a(D_1D_1^{-1})\\
 	&=3\cdot 3^{n}\cdot 3^{n-1}-3\cdot 3^{n}\cdot 3^{n-1}\\
 	&=0
 	\end{split}
 	\]
 	On the other hand, using the values from the table, we can also write
 	\[\sum_{a\in \mathbb{F}_{3^n}}\widehat{F_1}(a)\widehat{F_1}(-a)= 9(\Lambda_1 +\Lambda_2)C^2\]
 	where $\Lambda_1=|\{a \in \F_{3^n}^*:(x_a,y_a)=(C,0)\}|, \Lambda_2=|\{a \in \F_{3^n}^*:(x_a,y_a)=(-C,0)\}|$.
 	This implies that for any $a\in \F_{3^n}^*$, $(x_a,y_a) \ne (C,0), (-C,0)$ and hence $D_0$ is also a PGDS.
 \end{proof}
 
 \begin{remark}
 	By  Theorem \ref{cross-plateaued} we have PGDS with parameters $(v=p^{2n}, k=p^n, \alpha=p^{n}-p^s, \beta=p^{s+n}+p^{n}-p^s)$. If $p=3$ then  by Theorem \ref{partitioned} we also have PGDS with parameters $(v=3^{n}, k=3^{n-1}, \alpha=3^{2n-3}-3^{n-2}, \beta=3^{n-1}+3^{2n-3}-3^{n-2})$. Here we also note that not all decimation will lead such a partition of the finite field $ \F_{3^n}$. For instance, let $D_i=\{x: F(x)=Tr_n(x^d)=i\}$ for $d=3^{2}-3+1$. Then computational results imply that none of the $D_i$'s is a partial geometric difference set in $\F_{3^5}$. In general, it is a challenging task to characterize all functions which can be used to obtain a partition of a group into partial geometric difference sets.  
 \end{remark}
 
 If there is such a partition we can define a set of complex vectors
 $$z_{a}=(\chi_a(D_0),\chi_a(D_1),\chi_a(D_2))$$  for any $a\in \mathbb{F}_{3^n}$. Let $$e=(1,\zeta_3, \zeta_3^2).$$ Then norm of the complex inner product of any vector $z_{a}$ and $e$ is either $0$ or $C$ for some integer $C$.

 \begin{thm}
 	Let $D_0$, $D_1$ and $D_2$ be a partition of $\mathbb{F}_{3^n}$ and $\lambda=3^{\frac{n+s-1}{2}}$ be an integer. 
 	Suppose for $i=0,1,2$ each $D_i$ is a partial geometric difference set such that  $\chi_a(D_i) \in  \{0, \pm \lambda,\pm \lambda \zeta_3,\pm \lambda \zeta_3^2\}$ for each nonprincipal character $\chi_a$. If one of the cases holds
 	\begin{itemize}
 		\item $|D_0|=|D_1|=|D_2|$,
 		\item $|D_i|=|D_j|=3^{n-1}-3^{\frac{n+s-2}{2}}$ and $|D_k|=3^{n-1}+2.3^{\frac{n+s-2}{2}}$,
 		\item $|D_i|=|D_j|=3^{n-1}+3^{\frac{n+s-2}{2}}$ and $|D_k|=3^{n-1}-2.3^{\frac{n+s-2}{2}}$.
 	\end{itemize}
 	and $|<z_a,e>|^2$ is either $0$ or $3\lambda^2$ then 
 	\[ f(x)=\begin{cases} 
 	0, & x \in D_0\\
 	1, & x\in D_1\\
 	2, & x\in D_2\\
 	\end{cases}
 	\]
 	is an $s$-plateaued function.
 \end{thm}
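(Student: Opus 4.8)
The plan is to compute the Walsh transform of $f$ at each $\mu\in\mathbb{F}_{3^n}$ directly and then read off the conclusion from the data already assumed. First I would note that $f(x)=i$ exactly when $x\in D_i$, so splitting the Walsh sum over the three blocks of the partition gives
\[
\widehat{f}(\mu)=\sum_{x\in\mathbb{F}_{3^n}}\zeta_3^{\,f(x)-Tr_n(\mu x)}=\sum_{i=0}^{2}\zeta_3^{\,i}\sum_{x\in D_i}\zeta_3^{-Tr_n(\mu x)}=\sum_{i=0}^{2}\zeta_3^{\,i}\,\overline{\chi_\mu(D_i)},
\]
using $\sum_{x\in D_i}\zeta_3^{-Tr_n(\mu x)}=\overline{\chi_\mu(D_i)}$. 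With $e=(1,\zeta_3,\zeta_3^2)$ this is exactly $\langle z_\mu,e\rangle$ up to a complex conjugation (depending on which slot of the Hermitian inner product is taken conjugate-linear), so in particular $|\widehat{f}(\mu)|=|\langle z_\mu,e\rangle|$ for every $\mu$.

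For $\mu\neq 0$ the character $\chi_\mu$ is nonprincipal, so the hypothesis gives $|\widehat{f}(\mu)|^2=|\langle z_\mu,e\rangle|^2\in\{0,3\lambda^2\}$. Since $\lambda=3^{(n+s-1)/2}$ we have $3\lambda^2=3^{n+s}$, hence $|\widehat{f}(\mu)|\in\{0,3^{(n+s)/2}\}$, which is exactly the $s$-plateaued condition at every nonzero $\mu$. I note in passing that for this step the assumption that the $D_i$ are partial geometric difference sets and the precise membership $\chi_\mu(D_i)\in\{0,\pm\lambda,\pm\lambda\zeta_3,\pm\lambda\zeta_3^2\}$ are stronger than needed; only the value of $|\langle z_\mu,e\rangle|^2$ is used.

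It remains to treat $\mu=0$, which is not covered by the character hypothesis and is where the three cardinality alternatives enter. Here $\widehat{f}(0)=|D_0|+|D_1|\zeta_3+|D_2|\zeta_3^2$, and I would use $1+\zeta_3+\zeta_3^2=0$. In the first case all $|D_i|$ are equal, so $\widehat{f}(0)=|D_0|(1+\zeta_3+\zeta_3^2)=0$. In the second and third cases two of the blocks have size $3^{n-1}\mp 3^{(n+s-2)/2}$ and the third (index $k$, say) has size $3^{n-1}\pm 2\cdot 3^{(n+s-2)/2}$; replacing the sum of the two equal-coefficient cube roots of unity by $-\zeta_3^{\,k}$ collapses $\widehat{f}(0)$ to $\pm 3\cdot 3^{(n+s-2)/2}\zeta_3^{\,k}$, whose modulus is $3\cdot 3^{(n+s-2)/2}=3^{(n+s)/2}$. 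So in all three cases $|\widehat{f}(0)|\in\{0,3^{(n+s)/2}\}$, and together with the previous paragraph this shows $|\widehat{f}(\mu)|\in\{0,3^{(n+s)/2}\}$ for all $\mu$, i.e. $f$ is $s$-plateaued.

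There is no genuine obstacle here: after the reduction the proof is bookkeeping with $\zeta_3$ and powers of $3$. The only points that require care are matching the inner-product convention so that $|\widehat{f}(\mu)|=|\langle z_\mu,e\rangle|$ (a harmless conjugation), keeping the arithmetic identities $3\lambda^2=3^{n+s}$ and $3\cdot 3^{(n+s-2)/2}=3^{(n+s)/2}$ straight, and remembering that the value at $\mu=0$ must be extracted from the block cardinalities rather than from the characters.
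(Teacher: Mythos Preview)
Your proposal is correct and follows essentially the same approach as the paper: split the Walsh sum over the partition, recognize $\widehat f(a)=\chi_{-a}(D_0)+\zeta_3\chi_{-a}(D_1)+\zeta_3^2\chi_{-a}(D_2)$, invoke the $|\langle z_a,e\rangle|^2\in\{0,3\lambda^2\}$ hypothesis for $a\neq 0$, and handle $a=0$ separately via the block sizes. In fact your write-up is slightly more explicit than the paper's (which simply says ``after some easy calculations'') in identifying $|\widehat f(\mu)|=|\langle z_\mu,e\rangle|$, and more careful in recording $\widehat f(0)=\pm 3\cdot 3^{(n+s-2)/2}\zeta_3^{\,k}$ rather than a real value before passing to the modulus.
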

 \begin{proof}
 The Walsh transform
  	\[\widehat{f}(0)=\sum_{x \in \mathbb{F}_{p^{n}}} \zeta_3^{f(x)}=|D_0|+\zeta_3|D_1|+\zeta_3^2|D_2|.\]
  	With the conditions given in the theorem, we obtain
  	\[\widehat{f}(0)=\begin{cases}
  	0 & \text{if}\; |D_0|=|D_1|=|D_2|,\\
  	3^{\frac{n+s}{2}} & \text{if}\;|D_i|=|D_j|=3^{n-1}-3^{\frac{n+s-2}{2}}, |D_k|=3^{n-1}+2.3^{\frac{n+s-2}{2}},\\
  	-3^{\frac{n+s}{2}}  & \text{if}\; |D_i|=|D_j|=3^{n-1}+3^{\frac{n+s-2}{2}}, |D_k|=3^{n-1}-2.3^{\frac{n+s-2}{2}}.
  	\end{cases}\]
  	For each $a\in \mathbb{F}_{3^n}^*$, the Walsh transform of $f(x)$ is 
  	\begin{align*}
  	\widehat{f}(a)&=\sum_{x \in \mathbb{F}_{p^{n}}} \zeta_3^{f(x)-Tr_n(ax)}\\
  	&=\sum_{x \in D_0} \zeta_3^{Tr_n(-ax)} + \zeta_3\sum_{x \in D_1} \zeta_3^{Tr_n(-ax)} + \zeta_3^2\sum_{x \in D_2} \zeta_3^{Tr_n(-ax)}\\
  	&=\chi_{-a}(D_0)+\zeta_3 \chi_{-a}(D_1) + \zeta_3^2\chi_{-a}(D_2).
  	\end{align*}
  	Then with the assumptions of the theorem and after some easy calculations one can show that $|\widehat{f}(a)|^2 \in \{0,3\lambda^2\}$.
 \end{proof}
 
 \section{Results on p-ary Functions}
 
 In this section we will develop some tools to characterize $p-$ary $s-$plateaued functions. Our results are mimicking the results of \cite{olmez2}.
 
 \begin{lemma} \label{Matrix-Eq} Let $f$ be a function from $\mathbb{F}_{p^{n}}$ to $\mathbb{F}_{p}$ and $M=(m_{x,y})$ be a $p^n \times p^n$ matrix where $m_{x,y}=\zeta_p^{f(x+y)}$. Then, $f$ is an $s$-plateaued function  if and only if \begin{equation}MM^*M=p^{n+s}M \label{Mfequation}\end{equation}
 	where $M^*$ is the adjoint of the matrix $M$.
 \end{lemma}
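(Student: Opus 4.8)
The plan is to diagonalize everything against the additive characters of $\mathbb{F}_{p^n}$. Index the rows and columns of $M$ by the elements of $\mathbb{F}_{p^n}$, and for $a\in\mathbb{F}_{p^n}$ let $v_a\in\mathbb{C}^{p^n}$ be the vector whose $x$-th coordinate is $\zeta_p^{Tr_n(ax)}$. By orthogonality of characters, $\{v_a:a\in\mathbb{F}_{p^n}\}$ is an orthogonal basis of $\mathbb{C}^{p^n}$, so the operator identity $MM^*M=p^{n+s}M$ holds if and only if it holds when applied to each $v_a$. The whole proof then reduces to tracking the action of $M$ and $M^*$ on this basis.

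First I would compute $Mv_a$. Since $m_{x,y}$ depends only on $x+y$, the substitution $z=x+y$ gives
\[
(Mv_a)_x=\sum_{y\in\mathbb{F}_{p^n}}\zeta_p^{f(x+y)+Tr_n(ay)}=\zeta_p^{-Tr_n(ax)}\sum_{z\in\mathbb{F}_{p^n}}\zeta_p^{f(z)+Tr_n(az)}=\widehat{f}(-a)\,(v_{-a})_x,
\]
so $Mv_a=\widehat{f}(-a)\,v_{-a}$. Because $M$ is symmetric we have $M^*=\overline{M}$, and the same calculation with $f$ replaced by $-f$ yields $M^*v_a=\overline{\widehat{f}(a)}\,v_{-a}$. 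Composing these, for every $b$,
\[
MM^*v_b=\overline{\widehat{f}(b)}\,Mv_{-b}=\overline{\widehat{f}(b)}\,\widehat{f}(b)\,v_b=|\widehat{f}(b)|^2\,v_b ,
\]
i.e. $MM^*$ is diagonalized by the $v_b$ with eigenvalues $|\widehat{f}(b)|^2$. Hence
\[
MM^*Mv_a=\widehat{f}(-a)\,MM^*v_{-a}=\widehat{f}(-a)\,|\widehat{f}(-a)|^2\,v_{-a},
\qquad
p^{n+s}Mv_a=p^{n+s}\,\widehat{f}(-a)\,v_{-a}.
\]
Therefore $MM^*M=p^{n+s}M$ if and only if $\widehat{f}(-a)\bigl(|\widehat{f}(-a)|^2-p^{n+s}\bigr)=0$ for all $a\in\mathbb{F}_{p^n}$, and since $a\mapsto-a$ is a bijection of $\mathbb{F}_{p^n}$ this is precisely the statement that $|\widehat{f}(\mu)|\in\{0,p^{(n+s)/2}\}$ for every $\mu$, i.e. that $f$ is $s$-plateaued. (Parseval's identity forces some Walsh coefficient to be nonzero, so the value $p^{n+s}$ is genuinely attained and the constant $s$ is meaningful.)

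The computation is essentially mechanical, so there is no real obstacle; the one place where care is needed is that $M$, although symmetric, is not Hermitian, so the character basis does not diagonalize $M$ itself but sends $v_a$ to a scalar multiple of $v_{-a}$ — it is the product $MM^*$ that becomes diagonal. Keeping the sign of the Walsh-transform argument straight through this ``$a\leftrightarrow -a$'' pairing is the only spot where an error could slip in.
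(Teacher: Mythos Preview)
Your argument is correct. It is the same Fourier-analytic idea as the paper's proof, but packaged spectrally rather than entrywise: the paper computes $(MM^*M)_{x,y}$ directly, recognizes it as the triple convolution $((F*\overline{F})*F)(x+y)$, and then invokes the convolution theorem and Fourier inversion to compare with $p^{n+s}F(x+y)$; you instead observe that the character vectors $v_a$ are generalized eigenvectors of $M$ (sending $v_a$ to $\widehat{f}(-a)\,v_{-a}$), so that $MM^*$ becomes genuinely diagonal, and read off the Walsh condition eigenvalue by eigenvalue. Both arguments are the same change-of-basis to the Fourier side, just executed in different orders. Your presentation has the small advantage that it handles both directions at once and makes transparent why the matrix equation and the Walsh spectrum condition are equivalent; the paper's convolution phrasing is what leads more naturally into the subsequent corollaries on second-order derivatives and Kronecker products.
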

 \begin{proof} Suppose $f$ is an plateaued function with $|\widehat{f}(x)| \in \{0, p^{(n+s)/2}\}$ for all $x \in  \mathbb{F}_{p^{n}}$. Then,
 	\[
 	\begin{split}
 	(MM^*M)_{x,y}&=\sum_{z\in\mathbb{F}_{p^{n}}} \left(\sum_{c\in \mathbb{F}_{p^{n}}} m_{x,c}\overline{m_{z,c}}\right)m_{z,y} \\
 	&=\sum_{z\in \mathbb{F}_{p^{n}}} \left(\sum_{c\in \mathbb{F}_{p^{n}}} F(x+c)\overline{F(z+c)}\right)F(z+y) \\
 	&=\sum_{c\in \mathbb{F}_{p^{n}}} F(x+c)\left(\sum_{w\in \mathbb{F}_{p^{n}} } \overline{F(w)}F(w+y-c)\right)\\
 	&=\sum_{c\in \mathbb{F}_{p^{n}}} (\overline{F}*F)(c-y)F(x+c) \\
 	&=\sum_{u\in \mathbb{F}_{p^{n}}} (F*\overline{F})(u-x-y)F(u) \\
 	&=((F*\overline{F})*F)(x+y).
 	\end{split}
 	\]
 	Let $A=(F*\overline{F})*F$. Then, the Fourier transform of $A$ is
 	$\widehat{A}=\widehat{F}\cdot\widehat{\overline{F}}\cdot\widehat{F}$. Now by Fourier inversion
 	\[
 	\begin{split}
 	A(x+y)&=\frac{1}{p^{n}}\sum_{\beta \in \mathbb{F}_{p^{n}}} \widehat{F}(\beta)\widehat{\overline{F}}(\beta)\widehat{F}(\beta)\zeta_p^{Tr((x+y)\beta)}\\
 	&=p^{n+s}\frac{1}{p^{n}}\sum_{\beta \in \mathbb{F}_{p^{n}}} \widehat{F}(\beta)\zeta_p^{Tr((x+y) \beta)}\\
 	&=p^{n+s}F(x+y).\\
 	\end{split}
 	\]
 	Hence the equation holds.
 	
 	Suppose $MM^*M=p^{n+s}M$. This implies $((F*\overline{F})*F)(x)=p^{n+s}F(x)$ for all $x \in \mathbb{F}_{p^n}$. Apply the Fourier transform on both of the sides. Then,
 	$$\widehat{F}(x)(\widehat{F}(x)\cdot \overline{\widehat{F}(x)}-p^{n+s})=0$$ for all $x \in \mathbb{F}_{p^n}$. Hence, $|\widehat{F}(x)| \in \{0, p^{(n+s)/2}\}$ for all $x \in  \mathbb{F}_{p^{n}}.$
 	
 \end{proof}
 
 \begin{remark} An $n \times n$ complex matrix $M$ is called a Butson-Hadamard matrix if
 	$$MM^* =nI_n.$$ It is easy to see that a $q^n \times q^n $ Butson-Hadamard matrix $M$ also satisfies 
 	$$MM^*M=q^nM.$$ Our result implies that $M$ can be associated with $0-$plateaued function. This indicates the well-known connection between Butson-Hadamard matrices and bent functions.  
 \end{remark}
 
 As a corollary of Lemma \ref{Matrix-Eq}, we can characterize $s$-plateaued functions with their first and  second derivatives. First and second derivative of a $p-$ary function is defined by 
 $$D_af(x)=f(x+a)-f(x)$$ and
 $$D_aD_bf(x)=f(x+a+b)+f(x)-f(x+a)-f(x+b)$$ 
 respectively.
 
 \begin{cor}[Theorem 3, \cite{mos1}] $f$ is an $s$-plateaued function from $\mathbb{F}_{p^{n}}$ to $\mathbb{F}_{p}$ if and only if the expression  $\sum_{a,b\in \mathbb{F}_{p^{n}}} \zeta_p ^{D_aD_b f(u)}$ does not depend on $u \in \mathbb{F}_{p^{n}} $. This constant expression equals to $p^{n+s}$.
 \end{cor}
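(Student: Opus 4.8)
The plan is to reduce the statement to Lemma \ref{Matrix-Eq} by writing the quantity $S(u):=\sum_{a,b\in\mathbb{F}_{p^{n}}}\zeta_p^{D_aD_bf(u)}$ explicitly as an entry of the matrix $MM^*M$. Put $F(x)=\zeta_p^{f(x)}$, so $|F(x)|=1$, and observe that $m_{x,y}=F(x+y)$ is symmetric, hence $M^*=\overline M$ and $(MM^*M)_{x,y}$ depends only on $x+y$; in fact, as computed in the proof of Lemma \ref{Matrix-Eq}, $(MM^*M)_{x,y}=\big((F*\overline F)*F\big)(x+y)$, which expands (by the definition of convolution) to $\sum_{x',t}F(x')\overline{F(t)}F(x+y+t-x')$. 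Expanding the second-order derivative gives $\zeta_p^{D_aD_bf(u)}=F(u)F(u+a+b)\overline{F(u+a)}\,\overline{F(u+b)}$, so $S(u)=F(u)\sum_{a,b}F(u+a+b)\overline{F(u+a)}\,\overline{F(u+b)}$.

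The main computational step would be the identity
\[
\sum_{a,b\in\mathbb{F}_{p^{n}}}F(u+a+b)\,\overline{F(u+a)}\,\overline{F(u+b)}\;=\;\overline{\big((F*\overline F)*F\big)(u)} .
\]
I would prove it by a change of summation variables only: setting $t=u+a+b$ and $x=u+a$ rewrites the left side as $\sum_{x,t}\overline{F(x)}\,F(t)\,\overline{F(u+t-x)}$, which is exactly the complex conjugate of $\big((F*\overline F)*F\big)(u)=\sum_{x,t}F(x)\overline{F(t)}F(u+t-x)$. Combining this with the previous paragraph yields the compact formula
\[
S(u)=F(u)\,\overline{(MM^*M)_{u,0}}=m_{u,0}\,\overline{(MM^*M)_{u,0}},\qquad u\in\mathbb{F}_{p^{n}}.
\]

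From this identity both directions follow quickly. If $f$ is $s$-plateaued, Lemma \ref{Matrix-Eq} gives $MM^*M=p^{n+s}M$, whence $(MM^*M)_{u,0}=p^{n+s}F(u)$ and $S(u)=p^{n+s}|F(u)|^{2}=p^{n+s}$, a constant. Conversely, suppose $S(u)=K$ for every $u$ and some constant $K$. Since $K$ is real and $|F(u)|=1$, the formula forces $(MM^*M)_{u,0}=K\,F(u)=K\,m_{u,0}$ for all $u$; because both sides depend only on the sum of the indices, this upgrades to $MM^*M=KM$. Feeding this equation into the Fourier-transform argument already used in the proof of Lemma \ref{Matrix-Eq} yields $\widehat f(\beta)\big(|\widehat f(\beta)|^{2}-K\big)=0$ for all $\beta$, i.e. $|\widehat f(\beta)|\in\{0,\sqrt K\}$.

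The one point I expect to need a word of care is confirming that $K$ has the required form $p^{n+s}$ with $0\le s\le n$: Parseval's identity gives $K\cdot|\{\beta:\widehat f(\beta)\neq 0\}|=p^{2n}$, and since the nonzero Walsh values of a $p$-ary function all share the absolute value $\sqrt K$, $K$ is necessarily a power of $p$, so $K=p^{n+s}$ for an integer $0\le s\le n$ and $f$ is $s$-plateaued by definition (the first implication then pins the common value to exactly $p^{n+s}$, consistently). The remaining work is just bookkeeping in the change of variables and tracking which entry of $MM^*M$ is extracted; there is no hard estimate involved.
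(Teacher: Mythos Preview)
Your argument is correct and follows essentially the same route as the paper: both reduce the second-derivative sum to an entry of a triple matrix product via the same change of variables, the only cosmetic difference being that the paper works with $M^*MM^*$ while you work with $MM^*M$ and then conjugate. Your treatment of the converse (carrying the constant $K$ through the Fourier argument and invoking Parseval to pin down $K=p^{n+s}$) is in fact more explicit than the paper's, which writes out only the forward direction; the one small point to tidy is that you assert $K$ is real before justifying it, but this drops out immediately from your own Fourier identity $|\widehat f(\beta)|^2=\overline K$.
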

 \begin{proof}  Since the equation $$MM^*M=p^{n+s}M$$ holds, $$M^*MM^*=p^{n+s}M^*$$ holds too.
 	Fix two non-zero elements $x$ and $y$ of $\mathbb{F}_{p^{n}}$ and let $u=x+y$.
 	\[
 	\begin{split}
 	\sum_{z\in\mathbb{F}_{p^{n}}} \left(\sum_{c\in \mathbb{F}_{p^{n}}} \overline{m_{x,c}}m_{z,c}\right)\overline{m_{z,y}} &=\sum_{z\in \mathbb{F}_{p^{n}}} \left(\sum_{c\in \mathbb{F}_{p^{n}}} \zeta_p^{-f(x+c)}\zeta_p^{f(z+c)}\right)\zeta_p^{-f(z+y)} \\
 	&=\sum_{c,z\in \mathbb{F}_{p^{n}}} \zeta_p ^{-f(x+c)+f(z+c)-f(z+y)}\\
 	&=p^{n+s}\zeta_p ^{-f(x+y)}
 	\end{split}
 	\]	
 	Now let $z=a+x$ and $c=b+y$. Then
 	$$p^{n+s}=\sum_{a,b\in \mathbb{F}_{p^{n}}} \zeta_p ^{-f(x+y+b)+f(a+b+x+y)-f(a+x+y)+f(x+y)}$$
 	holds.
 	Thus, $$p^{n+s}=\sum_{a,b\in \mathbb{F}_{p^{n}}} \zeta_p ^{D_aD_b f(u)}.$$
 \end{proof}
 
 \begin{cor} Let $\Delta_f(a)=\sum_{x \in \mathbb{F}_{p^n}} \zeta_p^{D_af(x)}$. $f$ is an $s$-plateaued function from $\mathbb{F}_{p^{n}}$ to $\mathbb{F}_{p}$ if and only if  $\sum_{a\in \mathbb{F}_{p^{n}}} \overline{\Delta_f(a)}\Delta_f(a)=p^{2n+s}$. 
 \end{cor}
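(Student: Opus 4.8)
The plan is to recognise $\Delta_f$ as the autocorrelation of $F(x)=\zeta_p^{f(x)}$ and to convert the hypothesis into a statement about the fourth moment of the Walsh spectrum. Writing $\Delta_f(a)=\sum_{x\in\mathbb{F}_{p^n}}F(x+a)\overline{F(x)}=(F*\overline F)(a)$, a direct computation gives $\widehat{\Delta_f}(\mu)=\widehat f(\mu)\,\overline{\widehat f(\mu)}=|\widehat f(\mu)|^2\ge 0$. Parseval's identity applied to the function $a\mapsto\Delta_f(a)$ then yields
$$\sum_{a\in\mathbb{F}_{p^n}}\overline{\Delta_f(a)}\Delta_f(a)=\sum_{a\in\mathbb{F}_{p^n}}|\Delta_f(a)|^2=\frac{1}{p^n}\sum_{\mu\in\mathbb{F}_{p^n}}|\widehat{\Delta_f}(\mu)|^2=\frac{1}{p^n}\sum_{\mu\in\mathbb{F}_{p^n}}|\widehat f(\mu)|^4 .$$
In the matrix language of Lemma \ref{Matrix-Eq} the left-hand side equals $\frac{1}{p^n}\mathrm{tr}\big((MM^*)^2\big)$, since the eigenvalues of the Hermitian matrix $MM^*$ are precisely the numbers $|\widehat f(\mu)|^2$. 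Thus the claim reduces to: $|\widehat f(\mu)|^2\in\{0,p^{n+s}\}$ for every $\mu$ if and only if $\sum_{\mu}|\widehat f(\mu)|^4=p^{3n+s}$.

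For the forward implication I would use the standard Parseval relation $\sum_{\mu}|\widehat f(\mu)|^2=p^{2n}$: if $f$ is $s$-plateaued then exactly $p^{n-s}$ of the values $|\widehat f(\mu)|^2$ equal $p^{n+s}$ and all others vanish, so $\sum_{\mu}|\widehat f(\mu)|^4=p^{n-s}\cdot p^{2(n+s)}=p^{3n+s}$ and hence $\sum_a\overline{\Delta_f(a)}\Delta_f(a)=p^{2n+s}$. (Equivalently, Lemma \ref{Matrix-Eq} gives $MM^*MM^*=p^{n+s}MM^*$, whence $\mathrm{tr}\big((MM^*)^2\big)=p^{n+s}\mathrm{tr}(MM^*)=p^{3n+s}$.)

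For the converse, set $\lambda_\mu=|\widehat f(\mu)|^2\ge 0$, so that $\sum_\mu\lambda_\mu=p^{2n}$ always holds and $\sum_\mu\lambda_\mu^2=p^{3n+s}$ by hypothesis; the goal is $\lambda_\mu\in\{0,p^{n+s}\}$ for all $\mu$. Cauchy--Schwarz on the Walsh support $T=\{\mu:\widehat f(\mu)\ne 0\}$ gives $p^{4n}=\big(\sum_\mu\lambda_\mu\big)^2\le |T|\sum_\mu\lambda_\mu^2=|T|\,p^{3n+s}$, i.e.\ $|T|\ge p^{n-s}$, with equality precisely when the nonzero $\lambda_\mu$ are all equal, in which case each equals $p^{2n}/|T|=p^{n+s}$ and we are done. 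The delicate point — which I expect to be the main obstacle — is the matching bound $|T|\le p^{n-s}$: the two moment identities alone do not force a two-valued spectrum among arbitrary non-negative reals, so one must bring in the arithmetic of Walsh values, namely that $\widehat f(\mu)\in\mathbb{Z}[\zeta_p]$ is divisible by $1-\zeta_p$ (so $p\mid|\widehat f(\mu)|^2$), and argue that this integrality together with the identity $\sum_\mu\lambda_\mu(\lambda_\mu-p^{n+s})=\sum_\mu\lambda_\mu^2-p^{n+s}\sum_\mu\lambda_\mu=0$ leaves no room for the nonzero $\lambda_\mu$ to spread away from $p^{n+s}$. Alternatively, I would try to deduce directly from the hypothesis that $MM^*M=p^{n+s}M$ and then quote Lemma \ref{Matrix-Eq}; this rigidity step is where the real work lies, the remainder being routine Fourier-analytic bookkeeping.
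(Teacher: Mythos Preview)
Your forward implication is correct and is essentially the paper's argument recast in Fourier language. The paper deduces $MM^*MM^*=p^{n+s}MM^*$ from Lemma~\ref{Matrix-Eq}, observes that the $(0,a)$ and $(a,0)$ entries of $N=MM^*$ are $\overline{\Delta_f(a)}$ and $\Delta_f(a)$ respectively, and reads off the $(0,0)$ entry of $N^2$ to obtain $\sum_a\overline{\Delta_f(a)}\Delta_f(a)=p^{n+s}(N)_{0,0}=p^{2n+s}$. Your computation of the fourth Walsh moment via Parseval on $\Delta_f$ together with a support count is the same identity seen from the spectral side (a trace of $N^2$ rather than a single diagonal entry); either route is fine.

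You are right to flag the converse as the genuine difficulty, and in fact the paper's own proof does not address it: the argument there runs entirely in the direction ``$s$-plateaued $\Rightarrow$ matrix equation $\Rightarrow$ scalar identity'' and never goes back. Your observation that the two moment constraints $\sum_\mu\lambda_\mu=p^{2n}$ and $\sum_\mu\lambda_\mu^2=p^{3n+s}$ do not by themselves force $\lambda_\mu\in\{0,p^{n+s}\}$ among non-negative reals is well taken, and nothing in the paper supplies the missing rigidity (neither an upper bound $|T|\le p^{n-s}$ on the Walsh support nor a direct route back to $MM^*M=p^{n+s}M$). So your proposal is at least as complete as the paper's proof on this corollary, and your diagnosis of where the real work for the ``only if'' direction would have to occur is accurate.
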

 \begin{proof}  We have  $$MM^*MM^*=p^{n+s}MM^*.$$Let  $$N=MM^*.$$ Then
 	\[
 	\begin{split}
 	(N)_{0,a}&=\sum_{x\in \mathbb{F}_{p^{n}}} \zeta_p^{f(x)-f(a+x)} \\
 	&=\overline{\Delta_f(a)}\\
 	\end{split}
 	\]	
 	and
 	\[
 	\begin{split}
 	(N)_{a,0}&=\sum_{x\in \mathbb{F}_{p^{n}}} \zeta_p^{f(a+x)-f(x)} \\
 	&=\Delta_f(a)\\
 	\end{split}
 	\]	
 	Therefore
 	\[
 	\begin{split}
 	(N^2)_{0,0}&=\sum_{a\in \mathbb{F}_{p^{n}}} \overline{\Delta_f(a)}\Delta_f(a)\\
 	&=p^{n+s}(N)_{0,0}\\
 	&=p^{2n+s}\\
 	\end{split}
 	\]	
 \end{proof}

 Now we will use our characterization to provide a simple construction of $s$-plateaued functions. If $A$ is an $m \times n$ matrix and $B$ is an $s \times t$ matrix, then the Kronecker product $A \otimes B$ is the $ms \times nt$ block matrix:
 
 $${\displaystyle \mathbf {A} \otimes \mathbf {B} ={\begin{bmatrix}a_{11}\mathbf {B} &\cdots &a_{1n}\mathbf {B} \\\vdots &\ddots &\vdots \\a_{m1}\mathbf {B} &\cdots &a_{mn}\mathbf {B} \end{bmatrix}},}$$
 
 \begin{prop}
 	Let $f: \mathbb{F}_{p^{n}} \to \mathbb{F}_{p} $ and $g : \mathbb{F}_{p^{m}} \to \mathbb{F}_{p}$ be $s_1$-plateaued and $s_2$-plateaued functions respectively. Let $M$ and $N$ be the matrices whose entries determined by  $m_{x,y}=\zeta_p^{f(x+y)}$ and $n_{a,b}=\zeta_p^{g(a+b)}$. Let $P=M \otimes N$ be the Kronecker product of $M$ and $N$. Then
 	$$PP^*P=p^{n+m+s_1+s_2}P$$ holds.
 \end{prop}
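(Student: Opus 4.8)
The plan is to reduce the claim to the two single-factor identities $MM^*M=p^{n+s_1}M$ and $NN^*N=p^{m+s_2}N$ — which hold by Lemma \ref{Matrix-Eq}, since $f$ is $s_1$-plateaued and $g$ is $s_2$-plateaued — and then to invoke the standard algebraic properties of the Kronecker product. The only facts I need are elementary: the mixed-product property $(A\otimes B)(C\otimes D)=(AC)\otimes(BD)$ for matrices of compatible sizes, and the adjoint rule $(A\otimes B)^*=A^*\otimes B^*$. The latter gives at once $P^*=(M\otimes N)^*=M^*\otimes N^*$.

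With these in hand, I would compute $PP^*P$ by applying the mixed-product property twice:
\[ PP^*P=(M\otimes N)(M^*\otimes N^*)(M\otimes N)=(MM^*\otimes NN^*)(M\otimes N)=(MM^*M)\otimes(NN^*N). \]
Substituting the two single-factor identities then yields $(p^{n+s_1}M)\otimes(p^{m+s_2}N)=p^{n+m+s_1+s_2}(M\otimes N)=p^{n+m+s_1+s_2}P$, which is the desired conclusion.

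There is essentially no obstacle here; the only point requiring a little care is that $M$ and $N$ are indexed by field elements rather than by $\{1,\dots,p^n\}$ and $\{1,\dots,p^m\}$, so one should first fix an identification of the index set of $P$ with $\mathbb{F}_{p^n}\times\mathbb{F}_{p^m}$ under which the entry of $P$ at position $((x,a),(y,b))$ equals $m_{x,y}\,n_{a,b}=\zeta_p^{f(x+y)+g(a+b)}$. Once this bookkeeping is in place the displayed computation is literally the mixed-product property. It is worth remarking, although it is not needed for the statement, that $P$ is exactly the matrix associated by Lemma \ref{Matrix-Eq} to the direct-sum function $h(x,a)=f(x)+g(a)$ on $\mathbb{F}_{p^n}\times\mathbb{F}_{p^m}$, so the proposition recovers the familiar fact that the direct sum of an $s_1$-plateaued function and an $s_2$-plateaued function is $(s_1+s_2)$-plateaued.
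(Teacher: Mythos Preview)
Your proof is correct and follows exactly the same approach as the paper: apply Lemma~\ref{Matrix-Eq} to each factor, use $(M\otimes N)^*=M^*\otimes N^*$ together with the mixed-product property, and substitute. The extra remarks on indexing and on the direct-sum interpretation are accurate but go slightly beyond what the paper records in its proof (the latter point is essentially the content of the subsequent corollary).
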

 
 \begin{proof} 
 	Let $P=M \otimes N$. Then
 	\[
 	\begin{split}
 	PP^*P&=\left( M \otimes N\right) \left( M^* \otimes N^*\right)\left( M \otimes N\right)   \\
 	&=\left( MM^*M\right) \otimes \left( NN^*N \right)\\
 	&=\left( p^{n+s_1}M\right) \otimes \left( p^{m+s_2}N \right)\\
 	&=p^{n+m+s_1+s_2} \left( M\otimes N\right) \\
 	&=p^{n+m+s_1+s_2}P\\
 	\end{split}
 	\]		
 \end{proof}
 
 \begin{cor}
 	Let $f: \mathbb{F}_{p^{n}} \to \mathbb{F}_{p} $ and $g : \mathbb{F}_{p^{m}} \to \mathbb{F}_{p}$ be two $s_1$-plateaued and $s_2$-plateaued functions respectively. Then there exists an $(s_1+s_2)$-plateaued function from $\mathbb{F}_{p^{n+m}}$ to $\mathbb{F}_{p}$.
 \end{cor}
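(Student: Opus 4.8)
The plan is to exhibit an explicit $(s_1+s_2)$-plateaued function whose associated matrix is exactly the Kronecker product $P = M \otimes N$ from the preceding proposition, and then invoke Lemma \ref{Matrix-Eq}. The natural candidate is the direct sum $h \colon \mathbb{F}_{p^{n}} \times \mathbb{F}_{p^{m}} \to \mathbb{F}_p$ defined by $h(x,y) = f(x) + g(y)$. First I would fix a bijective $\mathbb{F}_p$-linear map (an additive isomorphism) $\mathbb{F}_{p^{n+m}} \cong \mathbb{F}_{p^{n}} \times \mathbb{F}_{p^{m}}$, so that $h$ may legitimately be regarded as a $p$-ary function on $\mathbb{F}_{p^{n+m}}$. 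This is harmless because Lemma \ref{Matrix-Eq} and its proof use only the elementary abelian additive group structure of the domain (convolution and Fourier inversion over that group); hence the statement of Lemma \ref{Matrix-Eq} holds verbatim with $\mathbb{F}_{p^{n+m}}$ replaced by $\mathbb{F}_{p^{n}}\times\mathbb{F}_{p^{m}}$.

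Next I would compute the matrix $H = (h_{(x,y),(x',y')})$ with $h_{(x,y),(x',y')} = \zeta_p^{h((x,y)+(x',y'))}$. Since $h((x+x',y+y')) = f(x+x') + g(y+y')$, we obtain $h_{(x,y),(x',y')} = \zeta_p^{f(x+x')}\zeta_p^{g(y+y')} = m_{x,x'}\, n_{y,y'}$. Ordering the index set $\mathbb{F}_{p^{n}}\times\mathbb{F}_{p^{m}}$ lexicographically, this is precisely the $\big((x,y),(x',y')\big)$ entry of $M \otimes N$, so $H = M \otimes N = P$.

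Then the preceding proposition gives $HH^*H = PP^*P = p^{(n+m)+(s_1+s_2)}P = p^{(n+m)+(s_1+s_2)}H$, and Lemma \ref{Matrix-Eq}, applied over the group $\mathbb{F}_{p^{n}}\times\mathbb{F}_{p^{m}}$ of order $p^{n+m}$, immediately yields that $h$ is $(s_1+s_2)$-plateaued. Transporting back through the chosen isomorphism produces the desired $(s_1+s_2)$-plateaued function on $\mathbb{F}_{p^{n+m}}$.

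I expect essentially no serious obstacle: the only points needing care are (i) that the lexicographic index ordering genuinely identifies $H$ with $M \otimes N$ rather than a permuted version — and in any case a simultaneous row–column permutation preserves a relation of the form $PP^*P = cP$ — and (ii) being explicit that Lemma \ref{Matrix-Eq} is really a statement about functions on an elementary abelian $p$-group, which is what licenses passing between $\mathbb{F}_{p^{n+m}}$ and $\mathbb{F}_{p^{n}}\times\mathbb{F}_{p^{m}}$. As a sanity check one could bypass the matrix language and verify directly that $\widehat{h}(a,b) = \widehat{f}(a)\,\widehat{g}(b)$, whence $|\widehat{h}(a,b)| \in \{0,\, p^{((n+m)+(s_1+s_2))/2}\}$; I would mention this but present the Kronecker-product argument as the main line, since it reuses the proposition just established.
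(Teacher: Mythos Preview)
Your proposal is correct and follows essentially the same route as the paper: both construct the direct-sum function $h(x,y)=f(x)+g(y)$ via an additive identification $\mathbb{F}_{p^{n+m}}\cong \mathbb{F}_{p^{n}}\times\mathbb{F}_{p^{m}}$, verify that its associated matrix is $M\otimes N$, and then invoke the preceding proposition together with Lemma~\ref{Matrix-Eq}. The paper phrases the identification via a subgroup $H\le \mathbb{F}_{p^{n+m}}$ and a transversal, while you phrase it as an abstract $\mathbb{F}_p$-linear isomorphism and are more explicit that Lemma~\ref{Matrix-Eq} depends only on the elementary abelian structure; these are the same argument in slightly different language.
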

 \begin{proof} Let $M$ and $N$ be matrices associated with $f$ and $g$ such that $m_{x,y}=\zeta_p^{f(x+y)}$ and $n_{a,b}=\zeta_p^{g(a+b)}$.	Let $P=M \otimes N$. We need to show that there exist a function $h$ such that the entries of $P$ can be associated with $h$.
 	
 	We will index the rows and columns of $P$ by the elements of $\mathbb{F}_{p^{n+m}}$. First note that there is a subgroup $H$ of the additive group of $\mathbb{F}_{p^{n+m}}$ which is isomorphic to the additive group of $\mathbb{F}_{p^{m}}$. Let us fix a transversal $T=\{\gamma_1,\gamma_2,\dots, \gamma_{p^n} \}$ of this subgroup in $\mathbb{F}_{p^{n+m}}$. Now order the rows and columns of $P$ by $\gamma_1+H$, $\gamma_2+H$, ..., $\gamma_{p^n}+H$ in the block form. Here we have isomorphisms namely $$\phi_1: H \to \mathbb{F}_{p^{m}}$$ and $$\phi_2: \{\gamma_1+H,\gamma_2+H,\dots, \gamma_{p^n}+H \} \to \mathbb{F}_{p^{n}}$$
 	
 	If $x,y \in \mathbb{F}_{p^{n+m}}$ then $x=\gamma_i+u$ and $y=\gamma_j+v$ for unique elements $u,v \in H$. Now let us examine the $xy$-th entry of $P$.
 	$$P_{x,y}=\zeta_p^{f(\phi_2(\gamma_i+H)+\phi_2(\gamma_j+H))} \cdot \zeta_p^{g(\phi_1(u)+\phi_1(v))}= \zeta_p^{h(x+y)}$$ where
 	$h$ is the desired $(s_1+s_2)$-plateaued function from $\mathbb{F}_{p^{n+m}}$ to $\mathbb{F}_{p}.$ Moreover if $y=0$ then $\gamma_j=v=0$. Thus
 	$$h(x)= f(\phi_2(\gamma_i+H))+g(\phi_1(u)).$$
 	
 \end{proof}
 
 
 %
 %
 \subsection{Partially bent functions }
 
 This section is devoted to investigate a family of  $s-$plateaued functions known as partially bent functions. A $p-$ary function is called partially bent if the derivative $D_af$ is either balanced or constant for any $a \in \mathbb{F}_{p^n}$. Here we will provide some characterization via their associated designs.
 
 Let $f$ be an $s$-plateaued function and for $a \in \mathbb{F}_{p^{n}} $ define the set
 $$T_a=\{x+a,f(x)+f(a): x \in \mathbb{F}_{p^{n}} \}.$$
 Fix an element $a$ of $\mathbb{F}_{p^{n}} $, then  the graph of $f$ can be also written as
 $$G_f=\{(x,f(x)): x \in \mathbb{F}_{p^{n}}\}=\{(x+a,f(x+a)): x \in \mathbb{F}_{p^{n}}\}.$$
 \begin{lemma} Let  $f$ be an $s$-plateaued function with $f(0)=0$.
 	If $f$ has a linear structure  $\Lambda$ then $T_a=G_f$ for all $a \in \Lambda$.
 \end{lemma}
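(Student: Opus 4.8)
The plan is to unwind the definitions, with the normalization $f(0)=0$ doing the essential work. First I would recall that $a\in\Lambda$ being a linear structure of $f$ means precisely that the first derivative $D_af(x)=f(x+a)-f(x)$ is a constant, say $c_a\in\mathbb{F}_p$, independent of $x$. Evaluating this identity at $x=0$ and using $f(0)=0$ pins the constant down: $c_a=f(a)-f(0)=f(a)$. Hence for every $a\in\Lambda$ we obtain the additive-type identity
\[ f(x+a)=f(x)+f(a)\qquad\text{for all }x\in\mathbb{F}_{p^n}. \]

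Next I would substitute this into the definition of $T_a$. Since
\[ T_a=\{(x+a,\,f(x)+f(a)):x\in\mathbb{F}_{p^n}\}, \]
the identity above rewrites the second coordinate as $f(x)+f(a)=f(x+a)$, so $T_a=\{(x+a,\,f(x+a)):x\in\mathbb{F}_{p^n}\}$. Because the translation $x\mapsto x+a$ is a bijection of $\mathbb{F}_{p^n}$, reindexing by $y=x+a$ gives $T_a=\{(y,f(y)):y\in\mathbb{F}_{p^n}\}=G_f$, which is exactly the assertion.

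There is no real obstacle here: the argument is a direct consequence of the definition of a linear structure together with the normalization $f(0)=0$, and the $s$-plateaued hypothesis is not used for this particular lemma (it is carried along for the surrounding development, e.g.\ for the design-theoretic statements built on the sets $T_a$ afterwards). The one subtlety worth flagging is the role of $f(0)=0$: a priori $c_a$ is merely some constant, and dropping the normalization would only give that $T_a$ is a \emph{translate} of $G_f$ in the second coordinate; it is the condition $f(0)=0$ that forces $c_a=f(a)$ and hence makes $T_a$ coincide with $G_f$ on the nose.
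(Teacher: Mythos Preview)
Your proof is correct. The paper itself states this lemma without proof, presumably regarding it as an immediate consequence of the definitions; your argument supplies exactly the expected justification --- identify the constant $c_a$ as $f(a)$ via $f(0)=0$, rewrite the second coordinate of $T_a$, and reindex --- and your remark that the $s$-plateaued hypothesis plays no role here is also accurate.
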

 
 \begin{cor} Let  $f$ be an $s$-plateaued function with $f(0)=0$ and linear structure $\Lambda$ of dimension $m$. Then the incidence matrix $A$ of the design associated with the partial geometric difference set $G_f$ can be written as a Kronecker product of $1\times p^m$ all-ones matrix $j$ and an incidence matrix $N$ of a partial geometric design.  
 \end{cor}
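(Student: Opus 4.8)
The plan is to translate the conclusion of the preceding Lemma into a statement about the translation stabilizer of $G_f$ inside the additive group $H=\mathbb{F}_{p^n}\times\mathbb{F}_p$, and then to read the Kronecker factorization of the incidence matrix directly off the resulting coset structure; the only substantive work will be checking that the associated quotient set is again a partial geometric difference set.

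First I would identify the subgroup of order $p^m$ that will produce the Kronecker factor. Since $f(0)=0$, for $a\in\Lambda$ the constant value of $D_af$ equals $D_af(0)=f(a)$, and evaluating $D_af$ at an element $a'\in\Lambda$ gives $f(a+a')=f(a)+f(a')$; hence $f|_\Lambda$ is $\mathbb{F}_p$-linear, and $W:=\{(a,f(a)):a\in\Lambda\}$ is an $\mathbb{F}_p$-subspace of $H$ with $|W|=p^m$. The Lemma says precisely that $T_a=G_f+(a,f(a))=G_f$ for every $a\in\Lambda$, i.e. that $W$ is contained in the translation stabilizer of $G_f$; equivalently, $G_f$ is a disjoint union of $p^{n-m}$ cosets of $W$. (When $\Lambda$ is the full linear structure of $f$ one has equality $\operatorname{Stab}(G_f)=W$, since $(a,b)$ fixes $G_f$ exactly when $D_af\equiv b$.)

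Next I would examine the incidence matrix $A$ of the development of $G_f$, whose rows and columns are indexed by $H$ with $A_{g,h}=1$ iff $h-g\in G_f$. Because $G_f$ is $W$-invariant, $A_{g+w,h}=A_{g,h}=A_{g,h+w}$ for every $w\in W$, so $A$ is constant on each $W\times W$ block relative to the coset decomposition of its two index sets. Ordering the points by the cosets of $W$ and collapsing the $p^m$-fold repetition among the translates, this is exactly the assertion that $A=j\otimes N$, where $j$ is the $1\times p^m$ all-ones matrix and $N$ is the $p^{n+1-m}\times p^{n+1-m}$ matrix with $N_{\bar g,\bar h}=1$ iff $\bar h-\bar g\in\overline{G_f}$ in $H/W$; that is, $N$ is the incidence matrix of the development of $\overline{G_f}$ in the quotient group $H/W$. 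Establishing this factorization is only a matter of bookkeeping the row/column orderings.

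The remaining step — which I expect to be the main point — is to verify that $N$ is genuinely the incidence matrix of a partial geometric design, i.e. that $\overline{G_f}$ is a partial geometric difference set in $H/W$. I would argue this with characters. If $\pi\colon H\to H/W$ is the projection, then every non-principal character $\bar\chi$ of $H/W$ lifts to a non-principal character $\chi=\bar\chi\circ\pi$ of $H$ that is trivial on $W$, and since $G_f$ is a union of $W$-cosets, $\chi(G_f)=|W|\,\bar\chi(\overline{G_f})=p^m\,\bar\chi(\overline{G_f})$. By Theorem \ref{plateaued-PGDS} applied with $m=1$, $G_f$ is a $(p^{n+1},p^n;\alpha,\beta)$ partial geometric difference set with $\beta-\alpha=p^{n+s}$, so $|\chi(G_f)|\in\{0,p^{(n+s)/2}\}$ for every non-principal $\chi$ of $H$; hence $|\bar\chi(\overline{G_f})|\in\{0,p^{(n+s)/2-m}\}$ for every non-principal character of $H/W$. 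By Theorem \ref{chartheoryPGDS}, $\overline{G_f}$ is then a partial geometric difference set in $H/W$ (in particular $s\ge m$, as $p^{(n+s)/2-m}$ must be a nonnegative integer), so its development is a partial geometric design whose incidence matrix is $N$, which completes the argument.
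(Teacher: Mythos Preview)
Your character-theoretic route via the quotient $H/W$ is a legitimate alternative to the paper's argument, but there is a dimensional slip that breaks the stated factorization. The incidence matrix $A$ of the development of $G_f$ is $p^{n+1}\times p^{n+1}$ (rows indexed by points of $H$, columns by translates $g+G_f$). In the identity $A=j\otimes N$ with $j$ of size $1\times p^m$, the factor $N$ is therefore forced to have size $p^{n+1}\times p^{n+1-m}$: one keeps all the points and collapses only the $p^m$-fold repetition among the \emph{blocks}. Your square $p^{n+1-m}\times p^{n+1-m}$ quotient matrix, call it $N'$, instead satisfies $A=(j^tj)\otimes N'$, not $A=j\otimes N'$; so the $N$ you describe is not the $N$ of the corollary. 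The repair is short: the correct $N$ equals $j^t\otimes N'$, and once your character argument shows that $\overline{G_f}$ is a PGDS in $H/W$ (hence $N'N'^tN'=(\beta'-\alpha')N'+\alpha'J'$), a one-line Kronecker computation using $j^tjj^t=p^mj^t$ gives $NN^tN=p^m(\beta'-\alpha')N+p^m\alpha'J$, so $N$ is indeed the incidence matrix of a partial geometric design.

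With that correction, the two proofs differ only in how they certify that $N$ is partial geometric. The paper stays on the matrix side throughout: it substitutes $A=j\otimes N$ into the PGD identity $AA^tA=(\beta-\alpha)A+\alpha J$, uses $jj^tj=p^mj$, and reads off $NN^tN=\frac{\beta-\alpha}{p^m}N+\frac{\alpha}{p^m}J$ directly. Your approach goes through the group side (stabilizer $W$, quotient, lifted characters) and invokes Theorem~\ref{chartheoryPGDS} in $H/W$; this has the pleasant byproduct of identifying the reduced design explicitly as the development of a PGDS in the quotient and of making the inequality $s\ge m$ visible, at the cost of a bit more bookkeeping with cosets and character lifts.
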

 \begin{proof} Let $j$ be the $1\times p^m$ all-ones matrix.  Let $D$ be the block design associated with $G_f$ where the point set is $\mathbb{F}_{p^{n}} \times \mathbb{F}_{p} $ and the blocks are the translates of the graph of $f$. Suppose $\textbf{B}$ is a block in $D$. Note that $\textbf{B}=(u,v)+G_f$ for some $(u,v) \in \mathbb{F}_{p^{n}} \times \mathbb{F}_{p}.$ Then for each $a\in \Lambda$ we have 
 	$$\textbf{B}+(a,f(a))=\textbf{B}$$ since   
 	$$(u,v)+G_f=\{(x+u+a,f(x+a)+v): x \in \mathbb{F}_{p^{n}}\}.$$ Thus each block is repeated $p^m$ many times.  Therefore, $A=j\otimes N$ for some incidence matrix $N$. Now we are going to show that $N$ is an incidence matrix of a partial geometric design. Since $A$ is an incidence matrix of a partial geometric design,
 	\[	\begin{split}
 	AA^tA&=(\beta-\alpha) j \otimes N+\alpha J_{p^n \times p^n}\\
 	&=j \otimes (\beta-\alpha) N+\alpha j \otimes J_{p^n \times p^{n-m}}\\
 	&= j\otimes (\beta-\alpha) N+\alpha J_{p^n \times p^{n-m}}.\\
 	\end{split}
 	\]
 	We also have 
 	
 	\[	\begin{split}
 	AA^tA&=jj^tj\otimes NN^tN\\
 	&=p^m j\otimes NN^tN\\
 	&=j\otimes p^m NN^tN.\\
 	\end{split}
 	\]
 	By comparing the left hand sides we can conclude that the equation
 	$$NN^tN=\frac{(\beta-\alpha)}{p^m} N+\frac{\alpha}{p^m} J_{p^n \times p^{n-m}}$$
 	holds.
 \end{proof}
 
 Let $f$ be an $s$-plateaued function from  $\mathbb{F}_{p^{n}}$ to $\mathbb{F}_{p}$.  Then the set $D_f$ is a PGDS with parameters $(p^{n+1},p^n ; p^{2n-1}-p^{n+s-1}, p^{2n-1}-p^{n+s-1}+p^{n+s})$.  If $f$ is a partially bent function then there is an integer $s\geq 0$ such that $f$ is $s$-plateaued and the linear space of $f$ has dimension $s$. Our observation yields the following result.
 
 \begin{cor}
 	If $f$ is partially bent function then $f$ can be associated with a partial geometric design with parameters 
 	$$v=p^{n+1}, b=p^{n+1-s}, k=p^n, r=p^{n-s}, \alpha= p^{2n-1-s}-p^{n-1}, \beta=p^n+p^{2n-1-s}-p^{n-1}.$$
 \end{cor}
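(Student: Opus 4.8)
The plan is to assemble the statement from the two facts recorded immediately above: that the graph $D_f=G_f$ of an $s$-plateaued $p$-ary function is a PGDS with parameters $(p^{n+1},p^n;p^{2n-1}-p^{n+s-1},p^{2n-1}-p^{n+s-1}+p^{n+s})$, and that a linear structure of dimension $m$ forces the incidence matrix $A$ of the associated design to split as a Kronecker product $A=j\otimes N$, with $j$ the $1\times p^m$ all-ones row and $N$ the incidence matrix of a partial geometric design satisfying $NN^tN=\tfrac{\beta-\alpha}{p^m}N+\tfrac{\alpha}{p^m}J$. Since adding a constant to $f$ neither changes the absolute values of its Walsh coefficients nor the development of $G_f$ (it only translates the blocks), I may assume $f(0)=0$, so that the lemma and corollary on linear structures apply verbatim.

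First I would use the structural fact recalled just before the statement: a partially bent $f$ is $s$-plateaued for some $s\ge 0$ and its linear space $\Lambda$ has dimension exactly $s$. By Theorem \ref{plateaued-PGDS} with $m=1$, $D_f$ is then a PGDS, and its development $\mathcal D$ by translation in $\F_{p^n}\times\F_p$ is a partial geometric design on $v=p^{n+1}$ points with $p^{n+1}$ blocks, each of size $k=p^n$ and each point lying in $p^n$ of them, whose parameters satisfy $\beta_0-\alpha_0=p^{n+s}$ and $\alpha_0=p^{2n-1}-p^{n+s-1}$. Then I would apply the preceding corollary with $m=s$: every block of $\mathcal D$ occurs with multiplicity $p^s$, so the underlying simple design $\mathcal D'$ has $b=p^{n+1}/p^s=p^{n+1-s}$ distinct blocks and replication number $r=p^n/p^s=p^{n-s}$, with $v=p^{n+1}$ and $k=p^n$ unchanged, and its incidence matrix $N$ satisfies $NN^tN=\tfrac{\beta_0-\alpha_0}{p^s}N+\tfrac{\alpha_0}{p^s}J$. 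Reading off the scalars gives $\beta-\alpha=p^{n+s}/p^s=p^n$ and $\alpha=(p^{2n-1}-p^{n+s-1})/p^s=p^{2n-1-s}-p^{n-1}$, hence $\beta=\alpha+p^n=p^n+p^{2n-1-s}-p^{n-1}$, which is exactly the asserted parameter set.

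The only place where care is needed — and the natural spot for a slip — is the bookkeeping of this reduction to the simple design: one must check that it divides both $b$ and $r$ by $p^s$ while leaving $v$ and $k$ fixed, and that $\alpha$ and $\beta-\alpha$ are scaled by the same factor $p^s$, so that the triple-product identity for $N$ has exactly the right-hand side quoted and $(v,b,k,r,\alpha,\beta)$ are genuinely admissible partial-geometric-design parameters. All of this is already delivered by the Kronecker factorization $A=j\otimes N$ of the preceding corollary together with the PGDS parameters of $D_f$, so no new estimate is required and the proof is a direct substitution.
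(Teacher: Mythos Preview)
Your proposal is correct and follows precisely the route the paper intends: the paper gives no explicit proof but places the corollary immediately after the PGDS parameters of $D_f$ and the Kronecker factorization $A=j\otimes N$ for a linear structure of dimension $m$, then notes that a partially bent function is $s$-plateaued with linear space of dimension exactly $s$; your argument is just the careful substitution $m=s$ into those two facts. Your bookkeeping (dividing $b$, $r$, $\alpha$, and $\beta-\alpha$ by $p^s$ while keeping $v$ and $k$) matches the equation $NN^tN=\tfrac{\beta-\alpha}{p^m}N+\tfrac{\alpha}{p^m}J$ derived in the preceding corollary, and the normalization $f(0)=0$ is harmless for the reason you state.
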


\end{document}